\theoremstyle{definition}
\newtheorem{thm}{Theorem}[section]
\newtheorem{exm}[thm]{Example}
\newtheorem{defi}[thm]{Definition}
\newtheorem{lemm}[thm]{Lemma}
\newtheorem{prop}[thm]{Proposition}
\newtheorem{cor}[thm]{Corollary}
\numberwithin{equation}{section}
\DeclareMathOperator{\diam}{diam}
\DeclareMathOperator{\curv}{curv}
\DeclareMathOperator{\im}{Im}
\DeclareMathSymbol{\C}{\mathalpha}{AMSb}{"43}
\DeclareMathSymbol{\D}{\mathalpha}{AMSb}{"44}
\DeclareMathSymbol{\R}{\mathalpha}{AMSb}{"52}
\DeclareMathAlphabet{\mathpzc}{OT1}{pzc}{m}{it}
\begin{document}
\title[Conformal Grushin spaces]{Conformal Grushin spaces}
\author{Matthew Romney}
\address{Department of Mathematics, University of Illinois at Urbana-Champaign, 1409 West Green St.,
Urbana, IL 61801}
\email{romney2@illinois.edu}

\date{}
\keywords{bi-Lipschitz embedding, Grushin plane, Alexandrov space, conformal mapping}
\subjclass[2010]{30L05}
       
\maketitle
\begin{abstract}
We introduce a class of metrics on $\mathbb{R}^n$ generalizing the classical Grushin plane. These are length metrics defined by the line element $ds = d_E(\cdot,Y)^{-\beta}ds_E$ for a closed nonempty subset $Y \subset \mathbb{R}^n$ and $\beta \in [0,1)$. We prove that, assuming a H\"older condition on the metric, these spaces are quasisymmetrically equivalent to $\mathbb{R}^n$ and can be embedded in some larger Euclidean space under a bi-Lipschitz map. Our main tool is an embedding characterization due to Seo, which we strengthen by removing the hypothesis of uniform perfectness. In the two-dimensional case, we give another proof of bi-Lipschitz embeddability based on growth bounds on sectional curvature. 
\end{abstract}

\section{Introduction}\label{sec:intro}

This paper is motivated by the bi-Lipschitz embedding problem: to find useful conditions that guarantee the existence (or non-existence) of a bi-Lipschitz embedding of a given metric space in some Euclidean space of sufficiently high dimension. This is a difficult problem which has been studied by many authors; see for example Heinonen \cite[Sec. 16.4-16.5]{Hein:nonsmooth} and Semmes \cite{Sem:1999} for background. It has drawn attention in part for its connections to theoretical computer science (see Naor \cite{Naor}). In this paper we study a class of metrics on $\mathbb{R}^n$ for which an affirmative answer exists. These are length metrics defined using the line element $ds = d_E(\cdot,Y)^{-\beta}ds_E$ for a given nonempty closed set $Y \subset \mathbb{R}^n$ and $\beta \in [0,1)$ (see Definition \ref{defi:grushin_space}). Here $d_E$ denotes Euclidean distance and $ds_E$ denotes the Euclidean line element. We call these {\it conformal Grushin spaces} because they are defined by a conformal deformation of the Euclidean metric (outside of $Y$) and they generalize the classical Grushin plane. The set $Y$ itself is called the {\it singular set}. 

These spaces can be bi-Lipschitz embedded in some Euclidean space assuming only the additional requirement that the metric satisfy a H\"older condition for the exponent $1-\beta$. As we will see, this is a fairly mild requirement; it is satisfied for instance when $Y$ has empty interior and $\mathbb{R}^n \setminus Y$ consists of finitely many uniform domains. We also study other aspects of the geometry of these spaces; in particular, $n$-dimensional conformal Grushin spaces satisfying the above H\"older condition are quasisymmetrically equivalent to $\mathbb{R}^n$ equipped with the Euclidean metric.

This paper builds in particular on a recent paper by Seo \cite{seo}, in which she gives a criterion for bi-Lipschitz embeddability of a metric space (see Theorem \ref{thm:seo_generalization} below) and applies it to the classical Grushin plane. Our proof is based on the same criterion and shows that her approach is useful for a much broader class of examples. For convenience, we state our main result here; its proof will be given in Section \ref{sec:bilip_embed_Grushin}. 

\begin{thm}\label{thm:grushin_bilip_embed}
Let $n \in \mathbb{N}$, let $Y \subset \mathbb{R}^n$ be nonempty and closed, and let $\beta \in [0,1)$. If the $(Y,\beta)$-Grushin space satisfies the H\"older condition (Definition \ref{defi:holder}), then it is bi-Lipschitz embeddable in some Euclidean space. The bi-Lipschitz constant and target dimension depend only on $\beta$, $n$, and the H\"older constant $H$. 
\end{thm}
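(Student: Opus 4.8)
The plan is to verify the hypotheses of the strengthened Seo criterion (Theorem~\ref{thm:seo_generalization}) for the $(Y,\beta)$-Grushin space $(\mathbb{R}^n, d_Y)$, where $d_Y$ denotes the length metric induced by $ds = d_E(\cdot,Y)^{-\beta}\,ds_E$. Since we are granted the version of Seo's theorem with the uniform perfectness hypothesis removed, it suffices to establish two things with constants depending only on $n$, $\beta$, and $H$: first, that $(\mathbb{R}^n, d_Y)$ is metrically doubling; and second, that it is \emph{uniformly locally bi-Lipschitz embeddable}, meaning that there are $L \geq 1$ and $N \in \mathbb{N}$ so that for every $x \in \mathbb{R}^n$ and every $r > 0$ the rescaled ball $(B_{d_Y}(x,r),\, r^{-1} d_Y)$ admits an $L$-bi-Lipschitz embedding into $\mathbb{R}^N$.

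The heart of the matter is a dichotomy for the local geometry, governed by the position of a ball relative to the singular set. Writing $\delta(z) = d_E(z, Y)$, I would first record the two elementary length estimates that control $d_Y$: a snowflake-type upper bound $d_Y(z,w) \leq H\, d_E(z,w)^{1-\beta}$, which is exactly what the Hölder condition (Definition~\ref{defi:holder}) supplies; and a matching lower bound obtained by estimating $\int_\gamma \delta^{-\beta}\,ds_E$ from below along any competitor curve $\gamma$, using that $\int_0^t s^{-\beta}\,ds \asymp t^{1-\beta}$ for radial motion toward $Y$ together with the scale-invariant oscillation bound $\tfrac12 \delta(z) \leq \delta(w) \leq 2\delta(z)$ valid on Euclidean balls of radius $\tfrac12 \delta(z)$. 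These two bounds identify, up to uniform bi-Lipschitz equivalence, the relevant model at each location and scale.

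With these in hand I would treat a ball $B_{d_Y}(x,r)$ in two regimes. When $x$ is far from $Y$ at the given scale --- quantitatively, when the Euclidean radius comparable to $r$ is at most a fixed fraction of $\delta(x)$ --- the oscillation bound forces $\delta$ to be essentially constant on $B_{d_Y}(x,r)$, so $d_Y$ is uniformly bi-Lipschitz to $\delta(x)^{-\beta} d_E$ there, and the rescaled ball embeds, up to the bounded conformal error, as a bounded subset of $\mathbb{R}^n$. When instead $x$ is close to $Y$, the two length estimates show that $r^{-1} d_Y$ is uniformly bi-Lipschitz to a bounded piece of the $(1-\beta)$-snowflake $(\mathbb{R}^n, d_E^{1-\beta})$; since Assouad's embedding theorem embeds this snowflake bi-Lipschitzly into a fixed $\mathbb{R}^N$ with constants depending only on $n$ and $1-\beta$, the rescaled ball again embeds with uniform constant. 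Taking $N$ and $L$ to dominate both regimes yields the required uniform local embeddability, and metric doubling follows from the same dichotomy, since both model spaces are doubling with constants depending only on $n$ and $\beta$.

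The main obstacle I anticipate is \emph{uniformity across the transition between the two regimes} and obtaining genuinely two-sided (rather than merely Lipschitz) control near the singular set. The Hölder condition delivers the upper bound cleanly, but pinning down the lower bound $d_Y(z,w) \gtrsim d_E(z,w)^{1-\beta}$ near $Y$ --- and verifying that a ball straddling the far/near boundary is still bi-Lipschitz to one of the two models with constants independent of $x$ and $r$ --- is where the careful estimates are concentrated. Forcing the constants to depend only on $n$, $\beta$, and $H$, rather than on the position of the ball, is the crux, and it is precisely the scale invariance of the oscillation bound on $\delta$ and the self-similarity of the snowflake model that make this possible.
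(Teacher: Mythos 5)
Your reduction misstates what Theorem \ref{thm:seo_generalization} asks for. Seo's criterion does not take ``doubling plus uniform embeddability of all rescaled balls'' as input: its hypotheses are (2) a closed set $Y$ admitting a \emph{global} bi-Lipschitz embedding into some $\mathbb{R}^{M_1}$, and (3) uniform embeddings of the cubes of a Christ--Whitney decomposition of $X\setminus Y$ (Definition \ref{defn:christ_whitney}) --- pieces whose diameter is a definite small fraction of their distance to $Y$, so that they all fall into your ``far'' regime. Your far-regime analysis is sound and is essentially the paper's verification of hypothesis (3): on a Whitney cube the weight $d_E(\cdot,Y)^{-\beta}$ oscillates boundedly, and the cube is bi-Lipschitz to a rescaled Euclidean set via $y\mapsto \ell^{-\beta}y$, with constant depending only on $\beta$ once the distance factor $a=a(\beta)$ in the decomposition is chosen large. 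The problem is the ``near'' regime, which in your plan is supposed to carry all balls that meet or straddle $Y$ --- exactly the balls for which uniform embeddability is equivalent in difficulty to the theorem itself.

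The claim that a ball $B_{d_Y}(x,r)$ centered near $Y$ is uniformly bi-Lipschitz to a bounded piece of the snowflake $(\mathbb{R}^n, d_E^{1-\beta})$ is false. The two-sided comparison $d_Y \asymp d_E^{1-\beta}$ holds only for pairs $z,w$ with $d_E(z,Y)\lesssim d_E(z,w)$ (this is the hypothesis $d_E(x,Y)\leq c\,d_E(x,y)$ of Lemma \ref{lemm:distance_lower_bound}); for pairs deep inside the complement, with $d_E(z,w)\ll \delta(z)=d_E(z,Y)$, one has $d_Y(z,w)\asymp \delta(z)^{-\beta}d_E(z,w)\ll d_E(z,w)^{1-\beta}$, so the snowflake lower bound fails by an unbounded factor as $d_E(z,w)/\delta(z)\to 0$, and any ball centered on $Y$ contains such pairs at all scales. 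Concretely, in the classical Grushin plane ($\beta=1/2$, $Y$ the singular line) a ball centered on $Y$ contains nonconstant rectifiable curves, since the metric is Riemannian off the singular line, whereas $(\mathbb{R}^2, d_E^{1/2})$ contains none; no bi-Lipschitz map between them can exist. This mixed geometry --- snowflake-like along $Y$, Euclidean-like at scales below the distance to $Y$ --- is precisely what the Whitney decomposition and the coloring/gluing argument behind Theorem \ref{thm:seo_generalization} are designed to reconcile, and it cannot be captured by assigning a single model space to each ball. (Your doubling argument inherits the same gap, though the conclusion is true.) The repair is the paper's route: prove doubling via quasisymmetry of the identity map (Theorem \ref{thm:quasisymmetry}, Corollary \ref{cor:doubling}); restrict the snowflake comparison to $Y$ itself, where H\"older plus Lemma \ref{lemm:distance_lower_bound} give $d_Y|Y \asymp d_E^{1-\beta}|Y$, and embed $Y$ by Assouad to get hypothesis (2); and run your far-regime estimate only on Christ--Whitney cubes to get hypothesis (3).
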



In the latter part of our paper, we consider on a more general level the question of what geometric conditions on a space are sufficient to guarantee its bi-Lipschitz embedability in Euclidean space. 
One possible answer can be given in terms of curvature bounds in the sense of Alexandrov geometry. We state it as the following result, which is essentially a more specialized version of Seo's embedding criterion. It applies to complete length metric spaces that satisfy a particular two-sided curvature condition except on a subset $Y$, which as before we may call the {\it singular set}.

\begin{thm}\label{thm:embedding_theorem}
Let $(X,d)$ be a complete length metric space satisfying the following:
\begin{itemize}
\setlength{\itemsep}{1pt}
  \setlength{\parskip}{0pt}
  \setlength{\parsep}{0pt}
\item[(1)] $(X,d)$ is doubling.
\item[(2)] There is a closed subset $Y \neq \emptyset$ of $X$ which admits an $L_1$-bi-Lipschitz embedding into some Euclidean space $\mathbb{R}^{M_1}$ for some $L_1\geq1$ and $M_1 \in \mathbb{N}$.
\item[(3)] The space $X \setminus Y$ is locally an (Alexandrov) space of bounded curvature of (Hausdorff) dimension $n< \infty$ and there exists $A\geq 0$ such that 
\begin{equation}\label{equ:curvature_growth}
-A\,d(x,Y)^{-2} \leq \curv_x X \leq A\,d(x,Y)^{-2}.
\end{equation}
for all $x \in X \setminus Y$.
\end{itemize}
Then $(X,d)$ admits a bi-Lipschitz embedding in some Euclidean space of sufficiently large dimension. The bi-Lipschitz constant and target dimension of this embedding depend only on the constants $L_1$, $M_1$,  $n$, and $A$, and the doubling constant.
\end{thm}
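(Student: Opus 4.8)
The plan is to deduce Theorem~\ref{thm:embedding_theorem} from Seo's embedding criterion (Theorem~\ref{thm:seo_generalization}), treating it as the workhorse. Seo's criterion should, roughly speaking, guarantee a bi-Lipschitz embedding once we know that $X$ is doubling, that the singular set $Y$ embeds bi-Lipschitzly, and that the complement $X \setminus Y$ is locally bi-Lipschitz to Euclidean space in a controlled, scale-invariant way near $Y$. Two of the three hypotheses of Theorem~\ref{thm:embedding_theorem} are precisely designed to match Seo's hypotheses: doubling is assumed outright in (1), and the bi-Lipschitz embeddability of $Y$ is assumed in (2). So the real content, and the one genuinely new ingredient I must supply, is to convert the Alexandrov curvature bound (3) into whatever local bi-Lipschitz-to-Euclidean control Seo's theorem demands on $X \setminus Y$.

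First I would recall the relevant quantitative consequence of two-sided Alexandrov curvature bounds: a length space that is simultaneously $\mathrm{CBB}(\kappa_-)$ and $\mathrm{CBA}(\kappa_+)$ of Hausdorff dimension $n$ is, on small enough balls, bi-Lipschitz equivalent to a Euclidean ball, with the bi-Lipschitz constant controlled by $n$ and by the product of the curvature bounds with the squared radius. The key scaling observation is that the curvature growth bound \eqref{equ:curvature_growth} is exactly dimensionally consistent: at a point $x$ with $d(x,Y) = r$, the curvature is bounded in absolute value by $A r^{-2}$, so on a ball of radius comparable to $r$ (say $B(x, c\,d(x,Y))$ for a small dimensional constant $c$) the quantity $|\mathrm{curv}| \cdot (\text{radius})^2$ is bounded by a constant depending only on $A$. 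Consequently each such ball is $L_2$-bi-Lipschitz to a Euclidean $n$-ball with $L_2 = L_2(n,A)$ \emph{independent of the scale $r$}. This scale invariance is the whole point of writing the curvature bound with the $d(x,Y)^{-2}$ factor, and it is what lets the eventual bi-Lipschitz constant depend only on the listed data.

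The main step is then to package these scale-invariant local charts into the form Seo's criterion requires. The plan is to cover $X \setminus Y$ by a Whitney-type decomposition adapted to the distance function $d(\cdot, Y)$: choose balls $B_i = B(x_i, c\,d(x_i,Y))$ whose radii are comparable to the distance to $Y$ and with bounded overlap, the latter being guaranteed by the doubling property~(1). On each $B_i$ we have a uniformly bi-Lipschitz chart into $\mathbb{R}^n$ from the previous paragraph, and the number of charts meeting any given location, together with the Lipschitz constants relating neighboring charts, is controlled by the doubling constant and by $n, A$. This is precisely the situation where Seo's machinery assembles the local Euclidean charts on $X \setminus Y$, together with the global bi-Lipschitz embedding of $Y$ coming from~(2), into a single bi-Lipschitz embedding of all of $X$ into a Euclidean space whose dimension and distortion depend only on $L_1, M_1, n, A$, and the doubling constant.

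I expect the main obstacle to be the interface between $X \setminus Y$ and $Y$, rather than the interior estimates. The interior charts are handled cleanly by the scale-invariant Alexandrov bi-Lipschitz estimate, but one must verify that as $x \to Y$ the local Euclidean structure degenerates in a compatible, controlled manner and glues correctly to the embedding of $Y$; in particular one needs that distances across $Y$ and distances measured through $X \setminus Y$ are quantitatively comparable, so that the two pieces of the embedding are compatible and no collapsing or blow-up occurs at the singular set. Verifying that the Whitney balls have uniformly bounded overlap and that the transition estimates degenerate at the correct rate as $d(x,Y)\to 0$ is where the doubling hypothesis and the precise exponent $-2$ in \eqref{equ:curvature_growth} must be used in tandem, and checking that this matches the exact hypotheses of Seo's theorem is the delicate bookkeeping that the proof will center on.
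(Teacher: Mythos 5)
Your proposal is correct and follows essentially the same route as the paper: apply Seo's criterion (Theorem \ref{thm:seo_generalization}), with hypotheses (1) and (2) verified directly, and hypothesis (3) obtained from the scale invariance of the Whitney curvature bound---since $|\curv| \cdot (\text{radius})^2 \lesssim A$ on balls of radius comparable to $d(\cdot,Y)$, the quantitative local embedding theorem for spaces of bounded curvature (Eriksson-Bique, Theorem \ref{thm:compact_embed} in the paper) gives uniform charts on the cubes of a Christ-Whitney decomposition with distance parameter $a \simeq A^{1/2}$. Your final paragraph over-anticipates difficulty: the interface with $Y$ and the chart-gluing bookkeeping are absorbed entirely by Seo's theorem, so the paper needs only one small extra check you omit, namely that completeness and the choice $R = 3r$ ensure the intrinsic metric on each ball agrees with $d$ so that Eriksson-Bique's theorem applies.
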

We refer to inequality (\ref{equ:curvature_growth}) as the  {\it (upper and lower) Whitney curvature bounds} because of its relation to a Whitney-type decomposition of $X$ with respect to $Y$ (see Definition \ref{defn:christ_whitney}).
Hypothesis (3) in Theorem \ref{thm:embedding_theorem} means that for all $x \in X \setminus Y$ there exists $r_x>0$ such that the ball $B=B(x,r_x)$ is a geodesic space (equipped with the metric $d|_{B \times B}$) and such that $B$ is a space of bounded curvature $\kappa = A\,d(x,Y)^{-2}$. See Section \ref{sec:curvature_bounds} for further explanation. The proof of Theorem \ref{thm:embedding_theorem} is an application of recent work on quantitative bi-Lipschitz embeddings of spaces of bounded curvature by Eriksson-Bique \cite{Eriksson} and will be given in Section \ref{sec:curvature_bounds}. In that section we will also use Theorem \ref{thm:embedding_theorem} to give a simpler alternative proof of Theorem \ref{thm:grushin_bilip_embed} in the two-dimensional case.



The organization of this paper is as follows. In Section \ref{sec:background} we give an overview of bi-Lipschitz embeddings, and in particular the embeddability criterion of Seo. Our version removes one of the hypotheses of the original theorem; in Section \ref{sec:seo_extension} we show how to modify Seo's proof to obtain our version. In Section \ref{sec:defi_grushin_spaces} we define conformal Grushin spaces and give sufficient conditions for the H\"older condition to be satisfied (Proposition \ref{prop:distance_upper_bound}). In Section \ref{sec:geometric_properties} we study the basic geometry of conformal Grushin spaces, giving estimates of the distance function on different scales. We also prove that these spaces are quasisymmetrically equivalent to $\mathbb{R}^n$, assuming the H\"older condition (Theorem \ref{thm:quasisymmetry}). In Section \ref{sec:bilip_embed_Grushin} we prove Theorem \ref{thm:grushin_bilip_embed}. In Section \ref{sec:curvature_bounds}, we prove Theorem \ref{thm:embedding_theorem} and re-prove Theorem \ref{thm:grushin_bilip_embed} in the two-dimensional case. 

\subsection*{Acknowledgments} The author acknowledges support from National Science Foundation grant DMS 08-38434 “EMSW21-MCTP: Research Experience for Graduate Students” at the University of Illinois.  The author also deeply thanks Jeremy Tyson for suggesting the project, for many hours of stimulating conversation, and for a careful reading of the paper. He also thanks Jang-Mei Wu for helpful discussions, Sylvester Eriksson-Bique for an explanation of his work in \cite{Eriksson}, and the anonymous referee for valuable feedback.

\section{Background} \label{sec:background}

\subsection{Bi-Lipschitz maps}

A map $f: (X,d_X) \rightarrow (Y,d_Y)$ between metric spaces is \textit{bi-Lipschitz} if there exists a constant $L \geq 1$ such that $L^{-1} d_X(x,y) \leq d_Y(f(x), f(y)) \leq L d_X(x,y)$ for all $x,y \in X$. The smallest $L$ for which this is satisfied is called the {\it distortion} of $f$, while we call any such $L$ a {\it bi-Lipschitz constant}.  A bi-Lipschitz mapping is continuous and injective and thus is an embedding in the topological sense.
To avoid excessive verbiage, we make a universal convention that terms such as \textit{embeds}, \textit{embedding}, and \textit{embeddable} when used without qualification in this paper always refer to bi-Lipschitz embeddability into some Euclidean space. 
The metric spaces considered in this paper (excluding Section \ref{sec:seo_extension}) are {\it length metric spaces}; this means that $d(x,y) = \inf \ell(\gamma)$, where $\ell$ denotes the length of a path $\gamma$ and the infimum is taken over the length of all paths $\gamma$ from $x$ to $y$. In  this paper, all paths are assumed to be absolutely continuous with respect to the Euclidean metric.     
 
A basic necessary condition for a space $(X,d)$ to admit a bi-Lipschitz embedding in Euclidean space is that $X$ be \textit{doubling}.  This means there exists a constant $D$ such that for all $x \in X$ and $r>0$, the  ball $B(x,r) := \{y: d(x,y)<r\}$ can be covered by $D$ balls $B(x_j, r/2)$.  It is easy to check that the doubling property is inherited by subsets of a metric space and that the image of a doubling space under a bi-Lipschitz map is doubling.  Any Euclidean space is doubling, so any embeddable space must also be doubling.    However, the doubling condition is not sufficient; the standard counterexample is the Heisenberg group with Carnot-Carath\'eodory metric.  This was first noted by Semmes \cite[Thm. 7.1]{Sem:1996}.  On the other hand, if $(X,d)$ is a doubling space, the \textit{snowflaked} metric space $(X,d^\beta)$, where $\beta \in (0,1)$, can be embedded in some Euclidean space. This is {\it Assouad's embedding theorem} \cite[Thm. 2.6]{Assouad}.

The property of bi-Lipschitz embeddability in Euclidean space is preserved under finite gluings, quantitatively, as shown by the following theorem of Lang and Plaut \cite[Thm. 3.2]{lp:2001}.  

\begin{thm}[Lang--Plaut \cite{lp:2001}] \label{thm:lang_plaut1}
For all $L, L' \geq 1$, $n, n' \in \mathbb{N}$, there exists a constant $\overline{L} \geq 1$ such that the following holds.  Let $(X,d)$ be a metric space, $Z, Z' \subset X$, $f:Z \rightarrow \mathbb{R}^n$ a $L$-bi-Lipschitz embedding, and $f': Z' \rightarrow \mathbb{R}^{n'}$ a $L'$-bi-Lipschitz embedding.  Then there is a $\overline{L}$-bi-Lipschitz embedding $f: Z \cup Z' \rightarrow \mathbb{R}^{n+n'+1}$.  
\end{thm}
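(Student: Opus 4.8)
The plan is to glue the two given embeddings into a single map into $\mathbb{R}^{n+n'+1}$ by first extending each to all of $Z \cup Z'$ and then appending one auxiliary coordinate that records distance to $Z$. Since $f$ is in particular $L$-Lipschitz, each of its coordinate functions $f_i \colon Z \to \mathbb{R}$ is $L$-Lipschitz (as $|f_i(x)-f_i(y)| \leq |f(x)-f(y)| \leq L\,d(x,y)$), so by the McShane extension theorem I would extend each $f_i$ to an $L$-Lipschitz function on $Z \cup Z'$; assembling these gives a map $F \colon Z \cup Z' \to \mathbb{R}^n$ that is $L\sqrt{n}$-Lipschitz and agrees with $f$ on $Z$. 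In the same way I would produce $F' \colon Z \cup Z' \to \mathbb{R}^{n'}$, which is $L'\sqrt{n'}$-Lipschitz and agrees with $f'$ on $Z'$. Crucially, because $F|_Z = f$ and $F'|_{Z'} = f'$, the lower bi-Lipschitz bound is preserved within each piece: $|F(x)-F(y)| \geq L^{-1}d(x,y)$ for $x,y \in Z$, and similarly $|F'(x)-F'(y)| \geq (L')^{-1}d(x,y)$ for $x,y \in Z'$.

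Next I would set $h(x) = d(x,Z)$, a $1$-Lipschitz function vanishing exactly on $Z$, and define the candidate embedding $\Phi \colon Z \cup Z' \to \mathbb{R}^{n} \times \mathbb{R}^{n'} \times \mathbb{R} = \mathbb{R}^{n+n'+1}$ by $\Phi = (F, F', h)$. The upper Lipschitz bound is immediate since each component is Lipschitz; concretely $\Phi$ is $(L\sqrt{n}+L'\sqrt{n'}+1)$-Lipschitz. The real content is the lower bound $|\Phi(x)-\Phi(y)| \geq \overline{L}^{-1}d(x,y)$, which I would establish by cases according to where $x$ and $y$ lie. If both points lie in $Z$, the first block of coordinates already gives $|\Phi(x)-\Phi(y)| \geq |F(x)-F(y)| \geq L^{-1}d(x,y)$, and symmetrically if both lie in $Z'$.

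The only genuinely problematic case, and the main obstacle, is a cross pair $x \in Z \setminus Z'$ and $y \in Z' \setminus Z$, where no single pre-existing embedding controls the distance. Here $h(x)=0$ while $h(y)=d(y,Z)$, and I would split on the size of $t := d(y,Z)$ against $d(x,y)$ via a threshold $\theta$ depending only on $L,L',n,n'$. If $t \geq \theta\,d(x,y)$, the auxiliary coordinate alone suffices, since $|\Phi(x)-\Phi(y)| \geq |h(x)-h(y)| = t \geq \theta\,d(x,y)$. If instead $t < \theta\,d(x,y)$, I would choose a point $z \in Z$ with $d(y,z)$ within a small factor of $t$ (possible since $t$ is an infimum), so that $d(x,z) \geq d(x,y)-d(y,z)$ is comparable to $d(x,y)$; then, using $|F(x)-F(z)| \geq L^{-1}d(x,z)$ (both $x,z \in Z$) together with the Lipschitz bound $|F(z)-F(y)| \leq L\sqrt{n}\,d(z,y)$, the triangle inequality yields
\[
|\Phi(x)-\Phi(y)| \geq |F(x)-F(y)| \geq L^{-1}d(x,z) - L\sqrt{n}\,d(z,y),
\]
which is bounded below by a positive multiple of $d(x,y)$ once $\theta$ is taken small enough to make the negative term negligible.

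Tracking the constants through these cases produces a lower bound $\overline{L}^{-1}d(x,y)$, and taking $\overline{L}$ to be the larger of this reciprocal and the upper Lipschitz constant completes the construction. The delicate point is the calibration of the threshold $\theta$ in the cross case: it must be small enough that the Lipschitz ``leakage'' term $L\sqrt{n}\,d(z,y)$ is dominated by the gain $L^{-1}d(x,z)$, yet the resulting $\overline{L}$ must remain uniform over all metric spaces $X$ and all subsets $Z,Z'$. This uniformity holds precisely because only $L$, $L'$, $n$, and $n'$ enter the estimates, with the space $X$ and the sets $Z,Z'$ never appearing except through these quantities.
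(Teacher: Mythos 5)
Your proposal is correct, and it is essentially the original Lang--Plaut argument: the paper itself imports this theorem from \cite{lp:2001} without proof, and the proof there follows the same scheme you describe --- coordinatewise McShane extension of each embedding to all of $Z \cup Z'$, one auxiliary coordinate given by $d(\cdot,Z)$, and a case analysis in which the cross case is handled by transferring to a near point of $Z$ and calibrating a threshold against the Lipschitz ``leakage.'' Your case analysis is complete (any pair not contained in $Z$ and not contained in $Z'$ must split, up to relabeling, as $x \in Z \setminus Z'$, $y \in Z' \setminus Z$), and all constants depend only on $L, L', n, n'$, as required. One small repair: since $Z$ is an arbitrary subset and need not be closed in $Z \cup Z'$, the quantity $t = d(y,Z)$ can vanish for $y \notin Z$, in which case ``choose $z \in Z$ with $d(y,z)$ within a small factor of $t$'' is unachievable. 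Instead, in the case $t < \theta\, d(x,y)$ choose $z \in Z$ with $d(y,z) < \theta\, d(x,y)$ directly, which the case hypothesis guarantees since $t$ is an infimum; the same estimate
\[
|F(x)-F(y)| \;\geq\; L^{-1}(1-\theta)\,d(x,y) - L\sqrt{n}\,\theta\,d(x,y)
\]
then goes through with, e.g., $\theta = \bigl(2(1+L^{2}\sqrt{n})\bigr)^{-1}$, which depends only on $L$ and $n$.
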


Even in the absence of a bi-Lipschitz embedding in Euclidean space, any separable metric space (and thus any doubling space) can be isometrically embedded in $\ell^\infty$.  This is Fr\'echet's embedding theorem \cite[Ch. 12]{hei:lectures}. 

The following theorem of Seo \cite{seo} gives conditions for a metric space to be embeddable.

\begin{thm}[Seo \cite{seo}]\label{thm:seo_generalization}
A metric space $(X,d)$ admits a bi-Lipschitz embedding into some Euclidean space if and only if there exist $M_1, M_2 \in \mathbb{N}$, $L_1, L_2\geq1$ such that the following hold:
\begin{itemize}
\setlength{\itemsep}{1pt}
  \setlength{\parskip}{0pt}
  \setlength{\parsep}{0pt}
\item[(1)]  $(X,d)$ is doubling.
\item[(2)]There is a closed subset $Y \neq \emptyset$ of $X$ which admits an $L_1$-bi-Lipschitz embedding into $\mathbb{R}^{M_1}$.
\item[(3)]  There is a Christ-Whitney decomposition of $X \setminus Y$ such that each cube admits an $L_2$-bi-Lipschitz embedding into $\mathbb{R}^{M_2}$ (see Definition \ref{defn:christ_whitney}).  
\end{itemize}
The bi-Lipschitz constant and target dimension of this embedding depend only on $M_1$, $M_2$, $L_1$, $L_2$, the doubling constant of $X$, and the data of the Christ-Whitney decomposition. 
\end{thm}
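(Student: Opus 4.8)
The statement is an equivalence, and the necessity direction is immediate: if $\Phi \colon X \to \R^N$ is $L$-bi-Lipschitz, then $X$ is doubling because $\R^N$ is doubling and both being doubling and being the image of a doubling space under a bi-Lipschitz map are inherited by subsets and preserved by bi-Lipschitz maps; moreover the restriction of $\Phi$ to any closed $Y \neq \emptyset$ and to any Christ-Whitney cube is again $L$-bi-Lipschitz into $\R^N$, so $(1)$--$(3)$ hold with $M_1 = M_2 = N$ and $L_1 = L_2 = L$. All of the content lies in the sufficiency direction, which I would prove by gluing the embedding of $Y$ to the local embeddings of the Christ-Whitney cubes.

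For sufficiency, fix a Christ-Whitney decomposition $\{Q_j\}$ of $X \setminus Y$ and recall its basic features: each cube satisfies $\diam Q_j \approx d(Q_j, Y)$, the cubes of a fixed generation tile an annular region at a fixed scale from $Y$, and slight enlargements of the cubes have bounded overlap controlled by the doubling constant. The central combinatorial step is to use the doubling property to partition $\{Q_j\}$ into finitely many families $\mathcal{F}_1, \dots, \mathcal{F}_k$, with $k$ depending only on the doubling constant, so that the enlarged cubes within a single family are pairwise disjoint and well separated relative to their diameters. On each family the individual $L_2$-bi-Lipschitz maps $g_Q \colon Q \to \R^{M_2}$ can be combined into a single map $H_c$ (via a Lipschitz partition of unity subordinate to the enlargements, taking care that overlapping local embeddings remain compatible), with distortion and target dimension controlled by $L_2$, $M_2$, and the doubling data. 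The maps $H_c$ provide local, within-cube resolution, while global position is recorded by the two further coordinates $F_Y \colon X \to \R^{M_1}$, the composition of a nearest-point map onto $Y$ with the given embedding from $(2)$, and $d(\cdot, Y)$; alternatively one may glue the subsets $Y, \mathcal{F}_1, \dots, \mathcal{F}_k$ in $k+1$ steps using the Lang--Plaut theorem (Theorem \ref{thm:lang_plaut1}).

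I would then assemble the full map as $F = (F_Y,\, d(\cdot, Y),\, H_1, \dots, H_k)$. The upper (Lipschitz) bound holds coordinatewise; the lower bound is the crux, which I would verify by a case analysis on $x, y \in X$ according to how $d(x,y)$ compares with $\min\{d(x,Y), d(y,Y)\}$. When the points lie close on the scale of their distance to $Y$, they sit in a common cube or in adjacent cubes, and the local coordinates $H_c$ recover $d(x,y)$; when they are far on that scale, the coordinates $F_Y$ and $d(\cdot,Y)$ jointly furnish a comparable lower bound. The delicate regime, and the main obstacle of the proof, is points approaching $Y$: there the cube scales degenerate and the local coordinates carry little information, so the estimate must come almost entirely from $F_Y$ and $d(\cdot,Y)$. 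This is precisely where Seo's original argument invokes uniform perfectness to bound the cube diameters from below; to remove that hypothesis I would argue directly from the Whitney comparison $\diam Q \approx d(Q, Y)$, which already controls the shrinking of scales toward $Y$ with no assumption on the internal regularity of $X \setminus Y$, and then track the resulting constants through the gluing to confirm that they depend only on $M_1$, $M_2$, $L_1$, $L_2$, the doubling constant, and the data of the decomposition.
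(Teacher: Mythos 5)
Your necessity direction is fine, and your sufficiency skeleton (coloring the cubes into finitely many well-separated families, Lipschitz cutoffs, the coordinates built from the embedding of $Y$ and from $d(\cdot,Y)$, Lang--Plaut gluing) is essentially Seo's original argument. The genuine gap is in your final paragraph, where you propose to handle the degeneration near $Y$ ``directly from the Whitney comparison $\diam Q \approx d(Q,Y)$.'' Without uniform perfectness that comparison holds in only one direction: Definition \ref{defn:christ_whitney} gives $Q \subset B(x, C_1\delta^k)$ and $d(Q, X\setminus\Omega) \approx \delta^k$, hence $\diam Q \lesssim d(Q,Y)$, but the lower bound $\diam Q \gtrsim \delta^k$ is precisely the inequality whose proof in Seo's Lemma 2.1 uses uniform perfectness --- the inner ball $B(x, c_0\delta^k) \subset Q$ may contain no point other than $x$, so a cube can have arbitrarily small (even zero) diameter compared with its distance to $Y$. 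Since the relative distance $\Delta(Q,R)$, the Whitney balls $Q^*$, and the Lipschitz constants of the cutoff functions are all normalized by $\diam Q$, such tiny cubes make the gluing constants blow up; your proposal therefore assumes exactly the fact that the strengthened theorem must do without, and this is the only place Seo's proof uses uniform perfectness. (A secondary issue: your map $F_Y$ composes a nearest-point projection onto $Y$ with the embedding of $Y$, but in a general metric space nearest-point maps need not exist, be single-valued, or be continuous; the standard device is to extend each coordinate of the embedding of $Y$ by a McShane--Whitney extension.)

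The paper's actual remedy does not try to restore the two-sided comparison. It first observes (Lemma \ref{prop:whitney_cube}) that any cube with $\diam Q \leq c_0\delta^k/2$ is automatically isolated, meaning $Q^* = Q^{**} = \{Q\}$, so small diameter is only a normalization problem, not an interaction problem. It then enlarges each offending cube by adjoining a single artificial point $q \in \ell^\infty$ at distance $c_0\delta^k/4$ from the center $x$, producing cubes $\widetilde{Q}$ with $\diam \widetilde{Q} \geq c_0\delta^k/4$ and an enlarged space $\widetilde{X}$; the isolation lemma guarantees that this creates no new Whitney-ball interactions and that the bounded-overlap count survives even though $\widetilde{X}$ is not known a priori to be doubling, the comparability of diameters of nearby cubes is recovered in Lemma \ref{prop:diameter}, each enlarged cube still embeds uniformly by Theorem \ref{thm:lang_plaut1}, and Seo's argument then runs verbatim on $\widetilde{X}$, with the embedding of $X$ obtained by restriction. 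To repair your proof you would need either this enlargement device or some substitute argument for cubes whose diameter is much smaller than $\delta^k$; as written, the ``direct argument from $\diam Q \approx d(Q,Y)$'' fails at exactly the delicate regime you correctly identified.
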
 
Our statement of Theorem \ref{thm:seo_generalization} is slightly stronger than that given by Seo, which has the additional assumption that the space be uniformly perfect.  This assumption is not used in an essential way in her proof, and Seo asks whether it may be removed \cite[Ques. 5.1]{seo}. This is indeed the case; it will be verified in Section \ref{sec:seo_extension}.  Our proof follows her argument fairly closely, requiring moderate reworking to account for the possibility of a non-uniformly perfect space. We have also simplified the statement of Theorem \ref{thm:seo_generalization} in two minor ways: by removing reference to a doubling measure, and by replacing the technical property of {\it admitting uniform Christ-local embeddings} with the more direct formulation given in hypothesis (3); the equivalence of these is immediate from Theorem \ref{thm:lang_plaut1} above. As a final difference, our definition of a Christ-Whitney decomposition (Definition \ref{defn:christ_whitney}) is slightly more general than that given by Seo; this extra flexibility is needed in proving the main theorems. 

A final definition we will require is that of a quasisymmetric map. A topological embedding $f: (X,d_X) \rightarrow (Y,d_Y)$ is {\it quasisymmetric} if there exists a homeomorphism $\eta: [0,\infty) \rightarrow [0,\infty)$ such that 
$$\frac{d_Y(f(x),f(y))}{d_Y(f(x),f(z))} \leq \eta(t)$$
whenever the distinct points $x,y,z\in X$ satisfy $d_X(x,y) \leq td_X(x,z)$. The function $\eta$ is called the {\it control function}. To check that a given map is quasisymmetric, it suffices to find any function $\eta: [0,\infty) \rightarrow [0,\infty)$ satisfying the above inequality such that $\lim_{t\rightarrow 0} \eta(t)= 0$, not necessarily a homeomorphism. 
The notion of quasisymmetric maps is weaker than that of bi-Lipschitz maps: any $L$-bi-Lipschitz map is quasisymmetric with $\eta(t) = L^2t$. For more background on quasisymmetric embeddings, one may consult the book of Heinonen \cite[Ch. 10-11]{hei:lectures}.


\subsection{Christ-Whitney decompositions} \label{sec:christ_whitney}

The following Christ-Whitney decomposition is a slight generalization of that used by Seo.  It is a metric space version of the classical Whitney decomposition theorem, which states that any open subset $\Omega \subsetneq \mathbb{R}^n$ can be written as the union of dyadic cubes with disjoint interior such that the diameter of each cube is comparable to the distance to the boundary; see Stein \cite[Sec. VI.1]{stein:1970}.  Similar Whitney-type decompositions for metric spaces have been considered by other authors, although these typically require only finite overlap instead of disjointness; see for instance Semmes \cite[Prop. 6.4]{Sem:1999}.  

The Christ-Whitney decomposition is based on the following theorem of Christ \cite[Thm. 11]{christ:1990}, which says that any doubling  
metric space may be decomposed as a system of ``dyadic cubes.''  

\begin{thm}[Christ decomposition \cite{christ:1990}]\label{thm:christ_decomposition}
Let $(X,d)$ be a doubling 
metric space, and let $\delta, c_0, C_1>0$ be constants satisfying $0<\delta +c_0 < 1/4$  and $C_1>(1-\delta)^{-1}$.  There exists a collection of open subsets  $\{Q_\mu^k: k \in \mathbb{Z}, \mu \in I_k\}$ 
such that 
\begin{itemize}
\setlength{\itemsep}{1pt}
  \setlength{\parskip}{0pt}
  \setlength{\parsep}{0pt}
\item[(1)] $\bigcup_{\mu \in I_k} Q_\mu^k$ is dense in $X$ for all $k \in \mathbb{Z}$.
\item[(2)] For any $\mu, \nu \in I_k$, $k ,l \in \mathbb{Z}$ with $l \geq k$, either $Q_\nu^l \subset Q_\mu^k$ or $Q_\nu^l \cap Q_\mu^k = \emptyset$.  
\item[(3)] For each $Q_\mu^k$, there exists $x_\mu^k \in Q_\mu^k$ such that $B(x_\mu^k, c_0\delta^k) \subset Q_\mu^k \subset B(x_\mu^k, C_1 \delta^k)$. 
\end{itemize} 
The sets $Q_\mu^k$ are referred to as \textit{(Christ) cubes}.  
\end{thm}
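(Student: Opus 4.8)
The plan is to follow the classical construction of ``dyadic cubes'' via maximal separated nets together with a tree (ancestor) structure on their centers, in the style of David and Christ. I would build the cubes from the bottom up, first producing a nested partition of a dense subset into half-open pieces and only at the end passing to the open sets $Q_\mu^k$ required by the statement.

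First I would produce the centers. For each $k \in \mathbb{Z}$, invoke Zorn's lemma to select a maximal $\delta^k$-separated set $\{x_\mu^k : \mu \in I_k\}$, i.e.\ a set with $d(x_\mu^k, x_\nu^k) \geq \delta^k$ for $\mu \neq \nu$ that cannot be enlarged. Maximality immediately yields the two features I will use repeatedly: the centers are $\delta^k$-dense (every point of $X$ lies within $\delta^k$ of some $x_\mu^k$), and the balls $B(x_\mu^k, \delta^k/2)$ are pairwise disjoint. The doubling hypothesis enters here to guarantee that each coarse center has only a bounded number of fine centers nearby, which keeps all the combinatorics locally finite.

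Next I would impose the tree structure. For each $\mu \in I_{k+1}$, density of the level-$k$ net lets me choose a parent $\nu \in I_k$ with $d(x_\mu^{k+1}, x_\nu^k) < \delta^k$; fixing one such choice for every center defines a parent map $\mathcal{A}_{k+1} \to \mathcal{A}_k$ and hence an ancestor relation on $\bigcup_k \mathcal{A}_k$. The crucial quantitative consequence is that an ancestor at level $k$ and any of its descendants at a level $l > k$ are separated by at most $\sum_{j=k}^{l-1} \delta^j < \delta^k/(1-\delta)$; the hypothesis $C_1 > (1-\delta)^{-1}$ is precisely what absorbs this geometric series into the outer ball of property (3). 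I would then define half-open cubes $\widetilde{Q}_\mu^k$ as the union, over all descendants of $x_\mu^k$, of the Voronoi-type cells assigning each finest-scale center to its nearest ancestor. By construction this gives, at each level, a partition of a dense subset of $X$ that is nested across levels, which is property (2); property (1) follows since the complements of these unions are built from net boundaries and are nowhere dense. The outer containment in (3) is the geometric-series estimate above, while the inner containment $B(x_\mu^k, c_0\delta^k) \subset \widetilde{Q}_\mu^k$ uses $\delta^k$-separation together with $\delta + c_0 < 1/4$: a point within $c_0\delta^k$ of $x_\mu^k$ is strictly closer to $x_\mu^k$ than to any competing center, so it is assigned to the cube of $x_\mu^k$. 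The final step is to replace each $\widetilde{Q}_\mu^k$ by an open set $Q_\mu^k$ (its interior, or the union of the corresponding open balls) while preserving nesting and both containments.

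I expect the main obstacle to be making the cross-scale nesting and the explicit ball-sandwiching hold \emph{simultaneously}, and in particular checking that the passage from the half-open partition to genuinely open sets does not destroy the inner containment $B(x_\mu^k, c_0\delta^k) \subset Q_\mu^k$ or the density in (1). This is exactly where the separation of the admissible constants ($\delta + c_0 < 1/4$ and $C_1 > (1-\delta)^{-1}$) must be used carefully, and where a routine but delicate argument about the negligibility of the cube boundaries is required.
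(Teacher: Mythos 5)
The paper itself gives no proof of this statement: it quotes the result from Christ \cite{christ:1990}, remarking only that an inspection of Christ's construction yields the stated admissible constants. Your blueprint is in fact Christ's: maximal $\delta^k$-separated nets, a parent map, ancestor chains, and the geometric series $\sum_{j\geq k}\delta^j < \delta^k/(1-\delta)$ absorbed by the hypothesis $C_1 > (1-\delta)^{-1}$. So the route is the right one, and your treatment of the outer containment, density, and cross-scale nesting is sound in outline. But there is a genuine gap at the inner containment $B(x_\mu^k, c_0\delta^k)\subset Q_\mu^k$, and it sits precisely at the step you flagged as delicate.

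You fix the parent map by ``fixing one such choice'' among all level-$k$ centers within $\delta^k$ of a given level-$(k+1)$ center, i.e.\ arbitrarily, and later justify the inner containment by a one-step Voronoi comparison: a point $y$ with $d(y,x_\mu^k)<c_0\delta^k$ is ``strictly closer to $x_\mu^k$ than to any competing center, so it is assigned to the cube of $x_\mu^k$.'' That is a non sequitur: membership of $y$ in a level-$k$ cube is not decided by comparing distances to level-$k$ centers, but by the ancestor chains of the fine-scale net points approximating $y$, and with an arbitrary parent map those chains can defect. Concretely, a level-$l$ net point $z^l$ with $d(z^l,y)<\delta^l$ has a level-$(k+1)$ ancestor $w$ with $d(w,x_\mu^k) < c_0\delta^k + \sum_{j=k+1}^{l}\delta^j < \bigl(c_0+\tfrac{\delta}{1-\delta}\bigr)\delta^k < \tfrac{\delta^k}{3}$, yet $w$ can still lie within $\delta^k$ of a different center $x_\nu^k$ (its distance to $x_\nu^k$ is only bounded below by $\tfrac{2}{3}\delta^k$), so an adversarial parent choice attaches $w$, hence its descendants, to $Q_\nu^k$; since the nets themselves are also arbitrary, one can even arrange $y$ to be a net point at every level $l>k$ and route its whole chain to $x_\nu^k$, placing $y\in Q_\nu^k$ and hence, by disjointness, $y\notin Q_\mu^k$. (If instead your ``nearest ancestor'' Voronoi cells mean assigning fine centers directly to the nearest level-$k$ center, independently at each level, then the inner containment holds but the nesting in property (2) fails, since Voronoi partitions at different scales do not nest.) The missing idea is Christ's preference rule: stipulate that whenever $d(x_\beta^{k+1},x_\alpha^k) < \delta^k/2$ the parent of $x_\beta^{k+1}$ \emph{must} be $x_\alpha^k$ --- such $\alpha$ is unique by $\delta^k$-separation, and choosing a nearest admissible parent implements this. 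With that rule, the drift estimate above forces the level-$(k+1)$ ancestor of every approximant of $y$ to have parent exactly $x_\mu^k$, because $c_0+\tfrac{\delta}{1-\delta} \leq \tfrac{4}{3}(c_0+\delta) < \tfrac{1}{3} < \tfrac{1}{2}$; this chain argument is where $\delta+c_0<1/4$ is actually consumed, not in a one-step nearest-center comparison. Once the rule is in place, the remainder of your plan (open cubes as unions of balls $B(z_\beta^l, c_0\delta^l)$ over descendants, density, and the outer ball) goes through as you anticipate.
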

Christ's original statement was in the context of metric measure spaces and gave stronger conclusions, stating in rough terms that the measure is not concentrated near the boundary of the Christ cubes.  However, the formulation we have given is sufficient for the present paper.  The proof given in \cite{christ:1990} also proves this weaker statement.
An inspection of Christ's proof also yields the constraints for $\delta, c_0$, and $C_1$ we have given. 

By selecting suitable Christ cubes we can form the following \emph{Christ-Whitney decomposition}, based on that used by Seo \cite[Lemma 2.1]{seo}. Our version introduces a distance factor (the constant $a$ in the data) to allow for decompositions on smaller scales. Having this level of control on the size of Christ-Whitney cubes relative to their distance to the boundary will be important in proving both Theorem \ref{thm:grushin_bilip_embed} and Theorem \ref{thm:embedding_theorem}.   

\begin{defi}[Christ-Whitney decomposition]\label{defn:christ_whitney}
Let $(X,d)$ be a doubling  
metric space.  For an open subset $\Omega \subsetneq X$, a \textit{Christ-Whitney decomposition of} $\Omega$ \textit{with data} $(\delta, c_0, C_1, a)$, where $\delta \in (0,1)$, $C_1 > c_0>0$, and $a \geq 4$, is a collection $M_\Omega$ of open subsets of $X$ satisfying the following properties:
\begin{itemize}
\setlength{\itemsep}{1pt}
  \setlength{\parskip}{0pt}
  \setlength{\parsep}{0pt}
\item[(1)] $\bigcup M_\Omega$ is dense in $\Omega$.
\item[(2)] $Q \cap Q' = \emptyset$ for all $Q, Q' \in M_\Omega$, where $Q \neq Q'$.   
\item[(3)] For any $Q \in M_\Omega$, there exists $x \in \Omega$ and $k \in \mathbb{Z}$ such that $$B(x, c_0\delta^k) \subset Q \subset B(x, C_1\delta^k)$$ and
\begin{equation} \label{equ:christ_distance}
\displaystyle (a-2)C_1 \delta^k \leq d(Q,X\setminus\Omega) \leq \left(\frac{aC_1}{\delta}\right) \delta^k.
\end{equation}    
\end{itemize}
The sets $Q \in M_\Omega$ are referred to as \textit{(Christ-Whitney) cubes}.    
\end{defi}
\begin{lemm}\label{lemm:christ_cubes}
Let $X$ be a doubling 
metric space and let $\Omega \subsetneq X$ be open.  For all $\delta, c_0, C_1, a>0$ satisfying $a\geq 4$, $0<\delta +c_0 < 1/4$  and $C_1>(1-\delta)^{-1}$, there exists a Christ-Whitney decomposition of $\Omega$ with data $(\delta, c_0, C_1, a)$. 
\end{lemm}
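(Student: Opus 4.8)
The plan is to build the decomposition by selecting suitable cubes from a Christ decomposition of $X$, mimicking the classical Whitney construction. First I would apply Theorem \ref{thm:christ_decomposition} with the given constants $\delta, c_0, C_1$; this is legitimate precisely because the hypotheses $0<\delta+c_0<1/4$ and $C_1>(1-\delta)^{-1}$ are exactly what that theorem requires. This produces a system $\{Q_\mu^k : k\in\mathbb{Z},\,\mu\in I_k\}$ satisfying properties (1)--(3) there; in particular each cube already carries the two-sided ball bound $B(x_\mu^k, c_0\delta^k)\subseteq Q_\mu^k \subseteq B(x_\mu^k, C_1\delta^k)$, which is precisely the ball condition demanded in part (3) of Definition \ref{defn:christ_whitney}, so that part is inherited for free. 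The nesting property (2) organizes the cubes into a forest under inclusion, so I may speak of the parent $\hat{Q}$ of a cube $Q=Q_\mu^k$, namely the level-$(k-1)$ cube containing it.

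Next I would fix the selection rule. Call $Q_\mu^k$ a \emph{candidate} if $d(Q_\mu^k, X\setminus\Omega)\ge (a-2)C_1\delta^k$, and place $Q_\mu^k$ into $M_\Omega$ exactly when it is a candidate but its parent is not; the threshold is designed to pin the scale $\delta^k$ to the distance from the boundary. The lower bound in (\ref{equ:christ_distance}) is then immediate from the candidate condition. For the upper bound I would use the parent: since $\hat{Q}$ fails to be a candidate, $d(\hat{Q}, X\setminus\Omega)<(a-2)C_1\delta^{k-1}$, and since $Q\subseteq\hat{Q}$ with $\diam\hat{Q}\le 2C_1\delta^{k-1}$, a triangle-inequality estimate yields
\[
d(Q, X\setminus\Omega)\le \diam\hat{Q}+d(\hat{Q}, X\setminus\Omega)\le 2C_1\delta^{k-1}+(a-2)C_1\delta^{k-1}=\tfrac{aC_1}{\delta}\delta^k,
\]
which is the required upper bound. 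Here is where the jump by the factor $\delta$ between a cube and its parent, together with the choice of the two thresholds, must be made to fit; one checks the bounds are mutually consistent since $(a-2)\le a/\delta$.

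It remains to verify properties (1) and (2). The engine for both is the monotonicity observation that candidacy passes to subcubes: if $R\subseteq R'$ with $R'$ a candidate, then $R$ is also a candidate, because $d(R, X\setminus\Omega)\ge d(R', X\setminus\Omega)$ while the threshold for the smaller cube $R$ is no larger than that for $R'$. For disjointness (property (2)), if two distinct selected cubes met, the nesting property would put one strictly inside the other, say $Q_\nu^l\subsetneq Q_\mu^k$ with $l>k$; but then the level-$(l-1)$ ancestor of $Q_\nu^l$ lies inside the candidate $Q_\mu^k$ and hence is itself a candidate, contradicting that $Q_\nu^l$ was selected (its parent must fail to be a candidate). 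For density (property (1)), given $x\in\Omega$ I would note $d(x, X\setminus\Omega)>0$, use density of the level-$k$ cubes to find, for large $k$, a cube $Q_\mu^k$ meeting an arbitrarily small neighborhood of $x$, and observe that for such $k$ the inequality $d(Q_\mu^k, X\setminus\Omega)\ge(a-2)C_1\delta^k$ holds automatically since the right-hand side tends to $0$. Because the monotonicity observation shows every candidate is contained in a unique selected cube (the chain of candidate ancestors being finite, as $(a-2)C_1\delta^k\to\infty$ when $k\to-\infty$ while $d(\,\cdot\,, X\setminus\Omega)$ stays bounded), these small candidates lie in $\bigcup M_\Omega$, so $\bigcup M_\Omega$ is dense in $\Omega$.

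The step I expect to be the main obstacle is making the two-sided estimate (\ref{equ:christ_distance}) hold simultaneously with disjointness and density: the upper bound forces a comparison of a selected cube with its parent across the scale gap, so the argument leans on the tree (parent) structure furnished by property (2), and the delicate point is reconciling this with the fact that each level of the Christ decomposition covers $X$ only densely rather than exactly. Handling cubes whose centers fall in the (nowhere dense) uncovered set, and confirming that the thresholds $(a-2)C_1\delta^k$ and $(aC_1/\delta)\delta^k$ leave a consistent window for every selected cube, is where the care is required.
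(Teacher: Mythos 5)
Your proof is correct, and it starts from the same place as the paper --- Christ's Theorem \ref{thm:christ_decomposition} with exactly these constants --- but your selection rule is genuinely different. The paper slices $\Omega$ into layers $\Omega_k = \{x : aC_1\delta^k < d(x,X\setminus\Omega) \leq aC_1\delta^{k-1}\}$, takes every level-$k$ Christ cube meeting $\Omega_k$ (which yields both sides of (\ref{equ:christ_distance}) at once: the upper bound because some point of the cube lies in $\Omega_k$, the lower bound by subtracting $\diam Q_\mu^k \leq 2C_1\delta^k$ from $aC_1\delta^k$), and then passes to the maximal cubes of this family to obtain disjointness; density and the ball condition are inherited from the Christ decomposition. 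Your stopping-time rule --- select a candidate whose parent is not a candidate --- produces a possibly different but equally valid family, with the upper bound extracted from the parent's failure rather than from membership in a layer. What each buys: the paper's layer trick never mentions parents at all (though it silently uses that ascending chains of its chosen cubes terminate, which follows exactly as in your finiteness argument, since $(a-2)C_1\delta^k \to \infty$ as $k \to -\infty$ while the distance to $X\setminus\Omega$ from a fixed point stays finite); your route leans on the tree structure, and the one point you flag but leave open --- whether every cube actually has a parent, given that each level covers $X$ only densely --- closes in one line: $Q_\mu^k$ is open and nonempty (it contains $B(x_\mu^k, c_0\delta^k)$), so it cannot be contained in the nowhere dense complement of the dense open-in-closure union $\bigcup_{\nu} Q_\nu^{k-1}$; it therefore meets some level-$(k-1)$ cube and, by the nesting property, is contained in it. With that observation inserted, your argument is complete: candidacy passing to subcubes, the disjointness contradiction via the level-$(l-1)$ ancestor (which covers even the degenerate case of equal point sets at different levels), and density via small candidates sitting inside maximal selected ancestors are all sound as written.
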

\begin{proof}
Fix data $(\delta, c_0, C_1, a)$ satisfying the given constraints. Let $\{Q_\mu^k \subset \Omega: k \in \mathbb{Z}, \mu \in I_k\}$ be a Christ decomposition guaranteed by Theorem \ref{thm:christ_decomposition}.  Next, write 
$\Omega = \bigcup_{k=-\infty}^\infty \Omega_k$, where $$\Omega_k = \{x: aC_1\delta^k < d(x, X\setminus\Omega) \leq aC_1 \delta^{k-1}\}.$$  

We make the initial choice of sets $$M_\Omega^0 = \bigcup_{k=-\infty}^\infty \{Q_\mu^k: \mu \in I_k, Q_\mu^k \cap \Omega_k \neq \emptyset\}.$$  Note that if $Q_\mu^k \in M_\Omega^0$, then $d(Q_\mu^k, X\setminus\Omega) \leq aC_1\delta^{k-1}$ and $d(Q_\mu^k, X\setminus\Omega) \geq aC_1\delta^k - \diam Q_\mu^k \geq (a-2)C_1\delta^k$.  Hence any $Q \in M_\Omega^0$ satisfies inequality (\ref{equ:christ_distance}) for this value of $k$.  

Since any two cubes in $M_\Omega^0$ are either disjoint or one contains the other, we may choose the subset $M_\Omega \subset M_\Omega^0$ of cubes which are maximal with respect to set inclusion.  The collection $M_\Omega$ satisfies property (2) as desired.  That $M_\Omega$ satisfies property (1) and the first part of property (3) follows easily from the properties of a Christ decomposition.   
\end{proof}

\section{Definition of conformal Grushin spaces} \label{sec:defi_grushin_spaces}

We begin by reviewing the classical definition. For this portion of the paper we use $(x,y)$ and $(u,v)$ to denote coordinates in $\mathbb{R}^2$ and $(x,y,z)$ to denote coordinates in $\mathbb{R}^3$.
The $\alpha$-Grushin plane $\mathbb{G}_\alpha^2$ is defined as $\mathbb{R}^2$ equipped with the sub-Riemannian (Carnot-Carath\'eodory) metric generated by $X_1 = \partial_x$ and $X_2 = |x|^\alpha\partial_y$.  Explicitly this metric is given by
$$d(z_1,z_2) = \inf \int_{\gamma} \sqrt{dx^2+|x|^{-2\alpha}dy^2},$$
where the infimum is taken over all (absolutely continuous) \textit{}paths $\gamma$ from $z_1$ to $z_2$. The resulting space is a Riemannian manifold except on the set $\{(x,y): x=0\}$, which we call the {\it singular line}. 
The 1-Grushin plane is generally referred to as the (classical) {\it Grushin plane}; this is a basic example of a non-equiregular sub-Riemannian manifold and a useful testing ground in sub-Riemannian geometry. For more background on the classical Grushin plane, see Bella\"iche \cite{Bellaiche}; the case of the more general $\alpha$-Grushin plane has been studied by Franchi and Lanconelli \cite{FL} and Monti and Morbidelli \cite{MonMor1}.

The fact that the Grushin plane $\mathbb{G}^2_1$ admits a bi-Lipschitz embedding in Euclidean space was first proved by Seo \cite{seo}.  An explicit embedding of the Grushin plane with sharp target dimension of 3 was constructed by Wu \cite{Wu}; this result was generalized to $\mathbb{G}_\alpha^2$ for all $\alpha \geq 0$ by the author and Vellis in \cite{RV}, with sharp target dimension of $[\alpha]+2$. Here $[\cdot]$ denotes the floor function. The embedding result of Seo is subsumed by Theorem \ref{thm:grushin_bilip_embed}. 

To motivate the definition of conformal Grushin spaces, consider the map $\varphi: \mathbb{R}^2 \rightarrow \mathbb{R}^2$ defined by
$$(u,v) = \varphi(x, y) = \left( \frac{1}{1+\alpha} |x|^\alpha x, y\right).$$
One can compute the push-forward of the $\alpha$-Grushin line element under $\varphi$ to be
$$ds' = \frac{1}{(1+\alpha)^{\alpha/(1+\alpha)}|u|^{\alpha/(1+\alpha)}}\sqrt{du^2 + dv^2}.$$
This resulting line element is an example of a {\it conformal deformation} of the Euclidean plane outside the singular line. 

For the following definition, recall the notation $d_E$ for Euclidean distance, and $ds_E$ for the Euclidean line element. Notice that $d_E$ is used for distance between both points and sets.

\begin{defi} \label{defi:grushin_space}
Let $n \in \mathbb{N}$, let $Y \subset \mathbb{R}^n$ be a nonempty closed set, and let $\beta \in [0,1)$. The $(Y, \beta)${\it-Grushin space} is the space $\mathbb{R}^n$ equipped with the metric determined by the line element
$$ds = \frac{ds_E}{d_E( \cdot ,Y)^\beta}.$$ 
More explicitly, the distance between two points $z_1, z_2 \in \mathbb{R}^n$ is given by $\inf  \int_\gamma ds,$
the infimum taken over all paths from $z_1$ to $z_2$. If $n=2$ we call this space the {\it $(Y,\beta)$-Grushin plane}.
\end{defi} 


This definition allows the possibility of infinite distance between points; there is no guarantee that the $(Y,\beta)$-Grushin space is actually a metric space. We remark that the same type of metric, though considered only on a proper domain $\Omega \subset \mathbb{R}^2$, has been studied in connection with Lipschitz mappings by Gehring and Martio \cite{GehringMartio85}. 
If we take $\beta = 1$ and restrict to a proper domain $\Omega \subset \mathbb{R}^n$, we arrive at the definition of the {\it quasihyperbolic metric}. However, the behavior of this class of metrics is quite different, since the singular set $Y$ lies at infinite distance from any nonsingular point under the quasihyperbolic metric. As a final remark, other generalizations of the classical Grushin plane to higher dimensions have been studied; see Wu \cite{Wu:grushin} for definitions and additional background.   

In the language of Definition \ref{defi:grushin_space}, the $\alpha$-Grushin plane is isometric, up to rescaling of the metric, to the $(Y, \beta)$-Grushin space with $Y = \{(u,v) \in \mathbb{R}^2: u=0\}$, where as before $\beta = \alpha/(1+\alpha)$. We highlight one more simple example.
\begin{exm}\label{exm:grushin_cone}
Take $Y = \{0\} \subset \mathbb{R}^2$. Representing points in the $(Y,\beta)$-Grushin plane in polar coordinates by $(r,\theta)$, the map
$$(x, y, z) = \left(r^{1-\beta}\cos\theta, r^{1-\beta}\sin \theta, r^{1-\beta} \sqrt{(1-\beta)^{-2}-1}\right)$$
is a path-isometry between the $(Y,\beta)$-Grushin plane and a cone $S$ in $\mathbb{R}^3$ with angular defect (total curvature) $2\pi\beta$. In this case the intrinsic metric on $S$ is bi-Lipschitz equivalent to the Euclidean metric. Composing the above map with the projection map into the $(x,y)$-plane gives a bi-Lipschitz map between $(\mathbb{R}^2,d_Y)$ and $(\mathbb{R}^2, d_E)$.
\end{exm}

Our main results require the following additional assumption on the $(Y,\beta)$-Grushin metric. From here on we will return to using $x,y,z$ to denote points in a metric space rather than coordinates. 
For a given nonempty closed set $Y \subset \mathbb{R}^n$, we use $d_Y$ to denote distance in the $(Y,\beta)$-Grushin metric on $\mathbb{R}^n$; in all cases there is a fixed $\beta$ which is clear from context. 
Also, $\ell_Y(\gamma)$ will denote the $(Y,\beta)$-Grushin length of the path $\gamma$, and  $\ell_E(\gamma)$ will denote the Euclidean length of $\gamma$.

\begin{defi} \label{defi:holder}
The $(Y,\beta)$-Grushin space satisfies the {\it H\"older condition} if there exists $H>0$ such that $d_Y(x,y) \leq Hd_E(x,y)^{1-\beta}$ for all $x,y \in \mathbb{R}^n$. That is, $d_Y$ is $(1-\beta)$-H\"older continuous as a function on $(\mathbb{R}^n, d_E)$. We call $H$ the {\it H\"older constant}.
\end{defi}

It will follow from Lemma \ref{lemm:distance_lower_bound} below that, assuming the H\"older condition, the $(Y,\beta)$-Grushin metric generates the same topology as the Euclidean metric, and hence that $\iota: (\mathbb{R}^n, d_E) \rightarrow (\mathbb{R}^n, d_Y)$ is a homeomorphism. 

We give a basic sufficient condition for the $(Y,\beta)$-Grushin plane to satisfy the H\"older condition.
Recall that a domain $\Omega \subset \mathbb{R}^n$ is {\it uniform} (or, more explicitly, {\it $C$-uniform}) if there exists $C>0$ such that for any two points $x, y\in \Omega$, there exists an arc $\gamma$ from $x$ to $y$ such that $\ell_E(\gamma) \leq Cd_E(x,y)$ and $d(z,\partial \Omega) \geq C^{-1}\min\{\ell_E(\gamma_{x,z}), \ell_E(\gamma_{z,y})\}$ for all $z \in \im \gamma$. Here $\gamma_{z,w}$ refers to the subarc of $\gamma$ from $z$ to $w$.  It is straightforward to show that this same defining property holds for points in $\overline{\Omega}$ (see V\"ais\"al\"a \cite[Thm. 2.11]{Vais3}). 

\begin{prop}\label{prop:distance_upper_bound}
Let $X \subset \mathbb{R}^n$ be a nonempty closed set such that $\Omega = \mathbb{R}^n \setminus X$ is the union of finitely many $C$-uniform domains and $\overline{\Omega} = \mathbb{R}^n$. Then for all $\beta \in [0,1)$ and any nonempty closed subset $Y \subset X$, the $(Y,\beta)$-Grushin space satisfies the H\"older condition. The H\"older constant depends only on $C$, $\beta$, and the number of components of $\Omega$, denoted by $N$. More precisely, 
$$d_Y(x,y) \leq \frac{2^\beta C^{2-\beta}N}{1-\beta} d_E(x,y)^{1-\beta}.$$
\end{prop}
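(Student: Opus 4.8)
The plan is to reduce the estimate to a single uniform domain and then to glue the pieces together along the straight segment from $x$ to $y$, capping the number of pieces by the number $N$ of components.

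First I would record the geometric consequence of the hypotheses. Write $\Omega = \Omega_1 \cup \cdots \cup \Omega_N$ with the $\Omega_i$ the ($C$-uniform) components of $\Omega$. Since $\overline{\Omega} = \mathbb{R}^n$, the set $X$ has empty interior and $X = \partial\Omega$; in particular $Y \subseteq \partial\Omega$. For a component, $\partial\Omega_i \subseteq \partial\Omega = X$, so $d_E(z,\partial\Omega_i) \geq d_E(z,X)$. Conversely, for $z \in \Omega_i$ the straight segment from $z$ to a nearest point of $X$ must first exit the open set $\Omega_i$ through $\partial\Omega_i$, giving $d_E(z,\partial\Omega_i) \leq d_E(z,X)$. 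Hence $d_E(z,\partial\Omega_i) = d_E(z,X)$ for every $z \in \Omega_i$, and therefore
$$ d_E(z,Y) \geq d_E(z,X) = d_E(z,\partial\Omega_i), \qquad z \in \Omega_i. $$
This is the inequality that lets the Grushin weight be controlled by the cigar condition of $\Omega_i$.

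Next comes the single-domain estimate: for $p,q \in \overline{\Omega_i}$ I would take the uniform arc $\gamma$ provided by the closure version of the uniform condition, parametrize it by Euclidean arclength on $[0,L]$ with $L = \ell_E(\gamma) \leq C d_E(p,q)$, and combine the displayed inequality with the cigar bound $d_E(\gamma(s),\partial\Omega_i) \geq C^{-1}\min\{s, L-s\}$ to obtain
$$ \ell_Y(\gamma) = \int_0^L d_E(\gamma(s),Y)^{-\beta}\,ds \leq C^\beta \int_0^L \min\{s,L-s\}^{-\beta}\,ds = \frac{2^\beta C^\beta}{1-\beta} L^{1-\beta}. $$
Using $L \leq C d_E(p,q)$ then yields $d_Y(p,q) \leq \frac{2^\beta C}{1-\beta} d_E(p,q)^{1-\beta}$; convergence of the integral at the endpoints uses $\beta < 1$.

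Finally, the gluing. Let $\sigma:[0,1]\to\mathbb{R}^n$ be the affine parametrization of $[x,y]$, and set $F_i = \sigma^{-1}(\overline{\Omega_i})$, closed sets covering $[0,1]$. I would build a subdivision by a \emph{last exit} rule: put $t_0 = 0$, choose $i_1$ with $0 \in F_{i_1}$, and let $t_1 = \max F_{i_1}$; if $t_k < 1$ a short continuity argument (some component must contain $\sigma(t)$ for a sequence $t \downarrow t_k$) produces $\Omega_{i_{k+1}}$ with $t_k \in F_{i_{k+1}}$ and $\max F_{i_{k+1}} > t_k$, and I set $t_{k+1} = \max F_{i_{k+1}}$. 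Because the values $\max F_{i_k} = t_k$ strictly increase, the indices $i_k$ are pairwise distinct, so the process stops at $t_m = 1$ with $m \leq N$. By construction $\sigma(t_{k-1}),\sigma(t_k) \in \overline{\Omega_{i_k}}$, so the single-domain estimate applies to each consecutive pair. Writing $a_k = d_E(\sigma(t_{k-1}),\sigma(t_k)) = (t_k-t_{k-1})d_E(x,y)$, so that $\sum_k a_k = d_E(x,y)$, and using concavity of $t\mapsto t^{1-\beta}$ (giving the power-mean bound $\sum_k a_k^{1-\beta} \leq m^\beta(\sum_k a_k)^{1-\beta}$), I would conclude
$$ d_Y(x,y) \leq \sum_{k=1}^m d_Y(\sigma(t_{k-1}),\sigma(t_k)) \leq \frac{2^\beta C}{1-\beta}\sum_{k=1}^m a_k^{1-\beta} \leq \frac{2^\beta C\, m^\beta}{1-\beta} d_E(x,y)^{1-\beta}. $$
Since $m \leq N$ and $C \geq 1$, bounding $m^\beta \leq N$ and $C \leq C^{2-\beta}$ gives the stated constant $\tfrac{2^\beta C^{2-\beta}N}{1-\beta}$ (the argument in fact yields the sharper $\tfrac{2^\beta C N^\beta}{1-\beta}$). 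I expect the main obstacle to be this gluing step: the straight segment can pass between components infinitely often, so the crux is the last-exit selection that caps the number of pieces at $N$ while guaranteeing each selected pair of endpoints lies in a common component closure. The single-domain estimate, by contrast, is a routine computation once the comparison $d_E(\cdot,Y) \geq d_E(\cdot,\partial\Omega_i)$ is established.
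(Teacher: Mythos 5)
Your proposal is correct and follows essentially the same route as the paper: split the straight segment $[x,y]$ into at most $N$ pieces whose endpoints lie in the closure of a common component, estimate each piece via the cigar condition of the uniform domain, and sum. You supply two details the paper leaves implicit (the last-exit selection proving the subdivision with $m \leq N$ exists, and the comparison $d_E(z,Y) \geq d_E(z,X) = d_E(z,\partial\Omega_i)$), and your power-mean summation even yields the slightly sharper constant $\tfrac{2^\beta C N^\beta}{1-\beta}$, which dominates the stated bound since $C \geq 1$.
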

\begin{proof}
Let $\Omega_1, \ldots, \Omega_N$ denote the components of $\mathbb{R}^n\setminus X$. Given points $x,y \in X$, consider the straight-line path $\gamma$ connecting them. There exist $k \leq N$ points $x=x_0, x_1, x_2, \ldots, x_k=y$ in $X \cap \im \gamma$ such that $x_j, x_{j-1}$ lie in the closure of the same domain $\Omega_{k_j}$ in $\mathbb{R}^n \setminus X$ for all $1 \leq j \leq k$.  Let $\gamma_j: [0, \ell_j] \rightarrow \mathbb{R}^n$ be a path from $x_{j-1}$ to $x_j$ in $\overline{\Omega}_{k_j}$ as in the definition of uniform domain, parametrized with respect to Euclidean arc length. Then 
\begin{align*} 
\ell_X(\gamma_j) & = \int_0^{\ell_j/2} \frac{dt}{d(\gamma_j(t),X)^{\beta}} + \int_{\ell_j/2}^{\ell_j} \frac{dt}{d(\gamma_j(t),X)^{\beta}}\\
& \leq 2 \int_0^{\ell_j/2} \frac{C dt}{t^{\beta}} = \frac{2C}{1-\beta} \frac{\ell_j^{1-\beta}}{2^{1-\beta}} \leq \frac{2^\beta C^{2-\beta}d_E(x_{j-1},x_j)^{1-\beta} }{1-\beta} .
\end{align*}
It follows that 
$$d_X(x,y) \leq \frac{2^\beta C^{2-\beta}N}{1-\beta} d_E(x,y)^{1-\beta}.$$
Since $Y \subset X$, we have $d_Y(x,y) \leq d_X(x,y)$, and the result follows.
\end{proof}

\begin{exm}
Consider the set $Y = (-\infty,0]\times \{0\} \subset \mathbb{R}^2$. Its complement is the slit plane, which is not a uniform domain. However, $Y$ is a subset of $X = \mathbb{R} \times \{0\}$, whose complement consists of two uniform domains. Such a space still satisfies the H\"older condition. 
\end{exm}



\section{Geometric properties of Grushin spaces} \label{sec:geometric_properties}

We continue now by proving some elementary geometric properties that hold for all $(Y,\beta)$-Grushin spaces. These lead us to a proof that the $(Y,\beta)$-Grushin space is quasisymmetrically equivalent to $\mathbb{R}^n$ with the Euclidean metric when it satisfies the H\"older condition. The corresponding fact was proved for the $\alpha$-Grushin plane by Meyerson \cite{Meyerson} and a generalized class of Grushin planes by Ackermann \cite{Ack}. 

Throughout this section we take $Y$ to be a nonempty closed set in $\mathbb{R}^n$ for some fixed $n \in \mathbb{N}$, and we take $\beta \in [0,1)$. 

\begin{lemm} \label{lemm:point_metric}
For any $x \in \mathbb{R}^n$, 
\begin{align*} \label{equ:beta_metric}
d_Y(x,Y) = \frac{d_E(x,Y)^{1-\beta}}{1-\beta}.
\end{align*}
\end{lemm}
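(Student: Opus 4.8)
The plan is to prove the two inequalities separately, writing $r = d_E(x,Y)$ and interpreting $d_Y(x,Y)$ as $\inf_{y \in Y} d_Y(x,y)$. For both directions the essential observation is that along the right kind of path the function $d_E(\cdot,Y)$ behaves like the distance to a single point, so that the Grushin length reduces to the elementary integral $\int_0^r s^{-\beta}\,ds = r^{1-\beta}/(1-\beta)$, which converges precisely because $\beta < 1$.

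For the upper bound, I would choose a nearest point $p \in Y$ realizing $d_E(x,Y) = r$ (which exists since $Y$ is closed and nonempty) and take $\gamma$ to be the Euclidean segment from $x$ to $p$, parametrized by arc length on $[0,r]$. The key point is that $d_E(\gamma(t),Y) = r - t$ exactly: the bound $d_E(\gamma(t),Y) \le |\gamma(t) - p| = r-t$ is immediate, while for any $q \in Y$ the triangle inequality gives $|\gamma(t) - q| \ge |x-q| - t \ge r - t$ since $|x-q| \ge r$. Hence
\[
d_Y(x,Y) \le \ell_Y(\gamma) = \int_0^r \frac{dt}{(r-t)^\beta} = \frac{r^{1-\beta}}{1-\beta}.
\]

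For the lower bound, let $\gamma : [0,1] \to \mathbb{R}^n$ be any path from $x$ to a point $y \in Y$, and set $g(t) = d_E(\gamma(t),Y)$. Since $d_E(\cdot,Y)$ is $1$-Lipschitz and $\gamma$ is absolutely continuous, $g$ is absolutely continuous with $|g'(t)| \le |\gamma'(t)|_E$ a.e., so
\[
\ell_Y(\gamma) = \int_0^1 \frac{|\gamma'(t)|_E}{g(t)^\beta}\,dt \ge \int_0^1 \frac{|g'(t)|}{g(t)^\beta}\,dt.
\]
Recognizing $g(t)^{-\beta}|g'(t)|$ as $\bigl|\tfrac{d}{dt} G(g(t))\bigr|$ with $G(u) = u^{1-\beta}/(1-\beta)$, I would compare against the total variation of $G \circ g$ between its endpoint values $G(r)$ and $G(0) = 0$, which yields $\ell_Y(\gamma) \ge r^{1-\beta}/(1-\beta)$. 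Taking the infimum over all such $\gamma$ and over $y \in Y$ then completes the proof.

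The one technical obstacle is precisely the singularity of the integrand where $g(t) \to 0$: because $G$ is only H\"older (not Lipschitz) at $0$, the composition $G \circ g$ need not be absolutely continuous on all of $[0,1]$, so I cannot directly apply the fundamental theorem of calculus. I would circumvent this by fixing $\epsilon \in (0,r)$, letting $t_\epsilon$ be the first time $g$ reaches the level $\epsilon$, and working on $[0,t_\epsilon]$, where $g \ge \epsilon$ and $G \circ g$ is genuinely Lipschitz. This gives $\int_0^{t_\epsilon} g^{-\beta}|g'|\,dt \ge G(r) - G(\epsilon) = (r^{1-\beta} - \epsilon^{1-\beta})/(1-\beta)$, and letting $\epsilon \to 0^+$ recovers the claimed bound. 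Everything else is routine.
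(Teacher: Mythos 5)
Your proof is correct, and it splits into one half identical to the paper's and one half that takes a genuinely different route. The upper bound is the paper's: both of you integrate $d_E(\cdot,Y)^{-\beta}$ along the Euclidean segment from $x$ to a nearest point of $Y$; the paper simply asserts the resulting value, while you supply the justification $d_E(\gamma(t),Y)=r-t$ (only the triangle-inequality half $d_E(\gamma(t),Y)\geq r-t$ is actually needed to bound the integrand, but the exact identity is fine). For the lower bound you diverge: you differentiate $g(t)=d_E(\gamma(t),Y)$, use the $1$-Lipschitz property in the form $|g'|\leq|\gamma'|_E$ a.e., and bound $\ell_Y(\gamma)$ below by the variation of $G\circ g$ with $G(u)=u^{1-\beta}/(1-\beta)$; since $G$ is not Lipschitz at $0$, the composition need not be absolutely continuous, and your truncation at the first time $g$ reaches level $\epsilon$, followed by $\epsilon\to 0^+$, is exactly the right fix. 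The paper avoids differentiation altogether: it parametrizes an arbitrary path from $Y$ to $x$ by Euclidean arc length, so the undifferentiated Lipschitz bound $d_E(\gamma(t),Y)\leq t$ holds pointwise, giving $\ell_Y(\gamma)\geq\int_0^{\ell_E(\gamma)}t^{-\beta}\,dt\geq\int_0^{d_E(x,Y)}t^{-\beta}\,dt$ since $\ell_E(\gamma)\geq d_E(x,Y)$. That pointwise comparison sidesteps the entire absolute-continuity issue your $\epsilon$-cutoff exists to handle, and it is the same device the paper reuses in Lemma \ref{lemm:distance_lower_bound}, anchored at $x$ instead of at $Y$. What your argument buys in exchange for the regularity bookkeeping is a stronger intermediate statement, $\ell_Y(\gamma)\geq|G(g(b))-G(g(a))|$ for any absolutely continuous path whatsoever, independent of where its endpoints sit relative to $Y$; the paper's version is shorter but tied to paths anchored at the singular set.
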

\begin{proof} 
Let $x \in \mathbb{R}^n$. By integrating over a straight-line path that realizes the Euclidean distance from $x$ to $Y$, it is clear that $d_Y(x,Y) \leq (1-\beta)^{-1}d_E(x,Y)^{1-\beta}$. For the reverse inequality, let $\gamma$ be an arbitrary path from $Y$ to $x$. Then 
$$\ell_Y(\gamma) \geq \int_0^{\ell_E(\gamma)} \frac{dt}{t^\beta} \geq \int_0^{d_E(x,Y)} \frac{dt}{t^\beta} = \frac{d_E(x,Y)^{1-\beta}}{1-\beta}.$$
\end{proof}

\begin{lemm}\label{lemm:distance_lower_bound}
Fix $c\geq 0$. For any $x,y \in \mathbb{R}^n$ satisfying $d_E(x,Y) \leq cd_E(x,y)$, 
$$d_Y(x,y) \geq \frac{1}{1-\beta}((1+c)^{1-\beta} - c^{1-\beta})d_E(x,y)^{1-\beta}.$$
\end{lemm}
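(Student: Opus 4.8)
The plan is to bound the Grushin length of an arbitrary path from below and then pass to the infimum. Let $\gamma$ be any path from $x$ to $y$, parametrized by Euclidean arc length, so that $\gamma\colon [0,L] \to \mathbb{R}^n$ with $\gamma(0)=x$, $\gamma(L)=y$, and $L = \ell_E(\gamma) \geq d_E(x,y)$. The key geometric input is the triangle inequality: since the nearest point of $Y$ to $x$ lies within Euclidean distance $d_E(x,Y)$ of $x$, and since $d_E(\gamma(t),x)\leq t$ (Euclidean distance is dominated by arc length), we obtain the pointwise estimate
$$d_E(\gamma(t),Y) \leq d_E(x,Y) + d_E(\gamma(t),x) \leq d_E(x,Y) + t.$$

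Writing $a = d_E(x,Y)$ and inserting this bound on the denominator into the definition of Grushin length gives
$$\ell_Y(\gamma) = \int_0^L \frac{dt}{d_E(\gamma(t),Y)^\beta} \geq \int_0^L \frac{dt}{(a+t)^\beta} = \frac{(a+L)^{1-\beta} - a^{1-\beta}}{1-\beta}.$$
It then remains to minimize the right-hand side over the admissible ranges $L \geq d_E(x,y)$ and $0 \leq a \leq c\,d_E(x,y)$. Setting $D = d_E(x,y)$, I would first discard the portion of the integral beyond $t=D$, which is legitimate since the integrand is positive and $L \geq D$, reducing matters to $\int_0^D (a+t)^{-\beta}\,dt$. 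Observing that the integrand $(a+t)^{-\beta}$ is a decreasing function of $a$ for each fixed $t$, this integral is decreasing in $a$ and hence minimized at the largest admissible value $a = cD$. Substituting yields
$$\int_0^D \frac{dt}{(a+t)^\beta} \geq \frac{(cD+D)^{1-\beta} - (cD)^{1-\beta}}{1-\beta} = \frac{(1+c)^{1-\beta} - c^{1-\beta}}{1-\beta}\,D^{1-\beta},$$
which is exactly the asserted bound. Taking the infimum over all paths $\gamma$ from $x$ to $y$ then completes the proof.

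There is no serious obstacle here; the argument is a direct estimate once the triangle-inequality bound $d_E(\gamma(t),Y) \leq a+t$ is in hand. The only point requiring genuine care is the order of the two reductions: one should truncate using $L \geq D$ first and only then exploit monotonicity in $a$, so that the resulting expression reproduces the stated constant \emph{exactly} rather than merely up to a comparable factor. I note for orientation that the degenerate case $c=0$ (i.e.\ $x \in Y$) forces $a=0$ and reduces the estimate precisely to the reverse-inequality computation appearing in the proof of Lemma \ref{lemm:point_metric}, so the present lemma can be viewed as a quantitative refinement of that computation.
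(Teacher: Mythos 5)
Your proof is correct and follows essentially the same route as the paper: both bound $\ell_Y(\gamma)$ from below by $\int_0^{d_E(x,y)} (t + d_E(x,Y))^{-\beta}\,dt$ using the triangle-inequality estimate along an arc-length parametrization, and then minimize over the admissible range $d_E(x,Y) \in [0,\, c\,d_E(x,y)]$. The only cosmetic difference is that you justify the monotonicity in $d_E(x,Y)$ at the level of the integrand, whereas the paper evaluates the integral first and shows $h(t) = (1+t)^{1-\beta} - t^{1-\beta}$ is decreasing by differentiation -- the same fact either way.
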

\begin{proof}
Consider an arbitrary path $\gamma$ from $x$ to $y$, parametrized by Euclidean arc length.  Then
\begin{align*} 
\ell_Y(\gamma) \geq &  \int_0^{d_E(x,y)} \frac{dt}{(t+d_E(x,Y))^\beta} \\ & = \frac{1}{1-\beta}\left((d_E(x,y)+d_E(x,Y))^{1-\beta} - d_E(x,Y)^{1-\beta}\right).
\end{align*}
Taking the infimum over all paths yields the same inequality with $d_Y(x,y)$ in place of $\ell_Y(\gamma)$. If $c=0$, then $d_E(x,Y)=0$ as well and we are done. 

Otherwise, write the above inequality in the form 
$$d_Y(x,y) \geq \frac{1}{1-\beta}\left(\left(1+\frac{d_E(x,Y)}{d_E(x,y)}\right)^{1-\beta} - \left(\frac{d_E(x,Y)}{d_E(x,y)}\right)^{1-\beta}\right)d_E(x,y)^{1-\beta}.$$
Let $h(t) = \left((1+t)^{1-\beta} - t^{1-\beta}\right)$, defined on $(0,c]$. We compute
$$h'(t) = (1-\beta)((1+t)^{-\beta} - t^{-\beta})<0.$$
This shows that $h(t)$ is decreasing, and in particular that $$h\left(\frac{d_E(x,Y)}{d_E(x,y)}\right) \geq h(c).$$ This yields the desired inequality. 
\end{proof}

\begin{thm} \label{thm:quasisymmetry}
Assume the $(Y,\beta)$-Grushin space satisfies the H\"older condition with constant $H$. The identity map $\iota: (\mathbb{R}^n, d_E) \rightarrow (\mathbb{R}^n, d_Y)$ is quasisymmetric. The control function $\eta$ depends only on $\beta$ and $H$.
\end{thm}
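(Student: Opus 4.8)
The plan is to verify the quasisymmetry inequality by hand. Fix distinct points $x,y,z$ with $d_E(x,y) \le t\, d_E(x,z)$; I will bound the ratio $d_Y(x,y)/d_Y(x,z)$ by a quantity $\eta(t)$ depending only on $\beta$ and $H$ and satisfying $\eta(t) \to 0$ as $t \to 0$. By the criterion noted after the definition of quasisymmetric maps, producing such an $\eta$ (not a priori a homeomorphism) is enough. Throughout I abbreviate $a = d_E(x,y)$, $b = d_E(x,z)$, and $s = d_E(x,Y)$, so that $a \le tb$. The guiding principle is that the local behaviour of $d_Y$ at $x$ is governed by the scale $s$: when $b$ is large relative to $s$ the H\"older bound controls distances, while when $b$ is small relative to $s$ the metric is comparable to the Euclidean metric scaled by $s^{-\beta}$. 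Accordingly I would split into the regimes $s \le b$ and $s > b$, refining the latter.

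For the denominator I would always invoke Lemma \ref{lemm:distance_lower_bound}. Taking $c = s/b$ there (so the hypothesis $d_E(x,Y) \le c\,d_E(x,z)$ holds with equality) and applying the mean value theorem to $r \mapsto r^{1-\beta}$ on the interval $(s,s+b)$ gives the clean lower bound
\[
 d_Y(x,z) \ge (s+b)^{-\beta}\, b .
\]
For the numerator I have two competing upper bounds. The H\"older condition (Definition \ref{defi:holder}) gives $d_Y(x,y) \le H a^{1-\beta}$, which is efficient when $x$ is near $Y$. When $x$ is far from $Y$, precisely when $a \le s/2$, the straight segment from $x$ to $y$ stays inside the Euclidean ball of radius $s/2$ about $x$, where $d_E(\cdot,Y) \ge s/2$; integrating the line element along this segment yields the local estimate $d_Y(x,y) \le 2^\beta s^{-\beta} a$.

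Combining these in cases gives the theorem. If $s \le b$, then $(s+b)^{-\beta} b \ge 2^{-\beta} b^{1-\beta}$, so the H\"older bound yields $d_Y(x,y)/d_Y(x,z) \le H 2^\beta t^{1-\beta}$. If $s > b$ and $a \le s/2$, then $(s+b)^{-\beta} b \ge 2^{-\beta} s^{-\beta} b$, and the local estimate yields $d_Y(x,y)/d_Y(x,z) \le 4^\beta (a/b) \le 4^\beta t$. The only remaining case is $a > s/2$, where $s < 2a \le 2tb$ forces $(s+b)^{-\beta} b \ge (2t+1)^{-\beta} b^{1-\beta}$, and the H\"older bound gives $d_Y(x,y)/d_Y(x,z) \le H t^{1-\beta}(2t+1)^\beta$. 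These three cases are exhaustive, so taking $\eta(t)$ to be the maximum of $H 2^\beta t^{1-\beta}$, $4^\beta t$, and $H t^{1-\beta}(2t+1)^\beta$ produces a function of $t$ alone (through $\beta$ and $H$) that tends to $0$ as $t \to 0$, completing the proof.

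The main obstacle is the degeneration of the denominator bound when $x$ is far from $Y$: as $s/b \to \infty$ the factor $(s+b)^{-\beta} b$ shrinks, and pairing it with the H\"older numerator bound would introduce an unbounded factor $(s/b)^\beta$. The remedy is exactly the local Euclidean estimate $d_Y(x,y) \le 2^\beta s^{-\beta} a$, whose scale factor $s^{-\beta}$ cancels the matching factor in the denominator; verifying that the relevant segment stays on the scale where $d_E(\cdot,Y) \approx s$ is the technical heart of the argument. A secondary concern is bookkeeping the constants so that the final $\eta$ depends only on $\beta$ and $H$, which the case analysis above is arranged to guarantee.
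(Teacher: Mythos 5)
Your proof is correct and follows essentially the same route as the paper's: both split into regimes according to whether $d_E(x,Y)$ is small or large relative to $d_E(x,z)$, bound the numerator by the H\"older condition near $Y$ and by the straight-line estimate $d_Y(x,y)\le 2^\beta d_E(x,Y)^{-\beta}d_E(x,y)$ far from $Y$, and bound the denominator via Lemma \ref{lemm:distance_lower_bound}. Your only departures are cosmetic: a three-case split in place of the paper's single threshold at $d_E(x,Y)=2t\,d_E(x,z)$, and a mean-value-theorem simplification yielding the uniform denominator bound $d_Y(x,z)\ge (d_E(x,Y)+d_E(x,z))^{-\beta}\,d_E(x,z)$, which the paper obtains directly by a path-integral estimate in its second case.
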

\begin{proof}
We need to show that, for all $t \geq 0$, there exists a value $\eta(t)$ such that, for all distinct points $x,y,z \in \mathbb{R}^n$ satisfying $d_E(x,y) \leq td_E(x,z)$, $d_Y(x,y) \leq \eta(t)d_Y(x,z)$, with $\lim_{t \rightarrow 0} \eta(t) = 0$. Let $t \geq 0$ and suppose that $x,y,z \in \mathbb{R}^n$ are three such points. 
We consider two cases.

First, assume that $d_E(x,Y) \leq 2td_E(x,z)$. By Lemma \ref{lemm:distance_lower_bound} there exists $c(t,\beta)>0$ such that $d_Y(x,z) \geq c(t,\beta) d_E(x,z)^{1-\beta}$. Now by the H\"older condition there exists $H$ such that $d_Y(x,y) \leq H d_E(x,y)^{1-\beta}$. It follows that
$$\frac{d_Y(x,y)}{d_Y(x,z)} \leq Hc(t,\beta)\left( \frac{d_E(x,y)}{d_E(x,z)} \right)^{1-\beta} \leq Hc(t,\beta)t^{1-\beta} .$$

Next, assume that $d_E(x,Y)> 2td_E(x,z)$. First consider the straight-line path $\gamma'$ from $x$ to $y$. Notice that for any point $w \in \im \gamma'$, $d_E(w,Y) \geq d_E(x,Y)/2$. From this we obtain
$$d_Y(x,y) \leq \int_{\gamma'} \frac{ds_E}{d_E(\gamma'(\cdot),Y)^\beta} \leq \int_{\gamma'} \frac{ds_E}{(d_E(x,Y)/2)^\beta} = 2^\beta\frac{d_E(x,y)}{d_E(x,Y)^\beta}. $$
Now let $\gamma$ be any path from $x$ to $z$. Then
\begin{align*} 
\ell_Y(\gamma) & = \int_0^{\ell_E(\gamma)} \frac{dt}{d_E(\cdot, Y)^\beta} \geq \frac{d_E(x,z)}{(d_E(x,Y) + d_E(x,z))^\beta} \\ & \geq \left(1 + \frac{1}{2t}\right)^{-\beta} \frac{d_E(x,z)}{d_E(x,Y)^\beta}.
\end{align*}
Since $\gamma$ is arbitrary the same inequality holds with $d_Y(x,z)$ in place of $\ell_Y(\gamma)$. We obtain
$$\frac{d_Y(x,y)}{d_Y(x,z)} \leq 2^\beta \left(1 + \frac{1}{2t}\right)^\beta  \left( \frac{d_E(x,y)}{d_E(x,z)}\right) \leq 2^\beta \left(1 + \frac{1}{2t}\right)^\beta t.$$
Take $\eta(t)$ to be the larger of the two upper bounds. It is easy to see that $\lim_{t \rightarrow 0} \eta(t) = 0$. 
\end{proof}
Since the doubling property is preserved under quasisymmetric maps, we obtain the following corollary.
\begin{cor}\label{cor:doubling}
If the $(Y,\beta)$-Grushin space satisfies the H\"older condition with constant $H$, then it is doubling. The doubling constant depends only on $\beta$, $n$ and $H$.
\end{cor}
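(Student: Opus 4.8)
The plan is to read the corollary off of Theorem \ref{thm:quasisymmetry} together with the standard fact that quasisymmetric homeomorphisms transport the doubling property quantitatively. Concretely, $(\mathbb{R}^n, d_E)$ is doubling with a constant depending only on the dimension $n$, and by Theorem \ref{thm:quasisymmetry} the identity map $\iota: (\mathbb{R}^n, d_E) \to (\mathbb{R}^n, d_Y)$ is $\eta$-quasisymmetric with control function $\eta$ depending only on $\beta$ and $H$. Since $\iota$ is a bijection, its image is all of $(\mathbb{R}^n, d_Y)$; and because the quasisymmetric image of a doubling space is doubling, with a doubling constant depending only on the original doubling constant and on $\eta$ (see Heinonen \cite[Ch. 10]{hei:lectures}), the target $(\mathbb{R}^n, d_Y)$ is doubling with a constant depending only on $n$, $\beta$, and $H$.

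For completeness I would recall why quasisymmetry preserves doubling quantitatively, since this is where the (entirely standard) work sits. The cleanest route uses the separated-set characterization of doubling: a metric space is doubling with constant $D$ if and only if there is an integer $N$, quantitatively interchangeable with $D$, such that for every $r>0$ any $r$-separated subset of a ball of radius $2r$ has at most $N$ points. One then uses that the inverse of an $\eta$-quasisymmetric map is again quasisymmetric, with control function $\eta'$ determined by $\eta$ alone. Given an $\rho$-separated family inside a $d_Y$-ball of radius $2\rho$, its preimage under $\iota$ is a configuration in $(\mathbb{R}^n, d_E)$ whose ratios of pairwise distances are controlled by $\eta'$; this confines the preimage to a controlled Euclidean ball in which it remains quantitatively separated, so its cardinality is bounded through the Euclidean doubling constant and $\eta'$. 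Bounding the cardinality of every such family bounds the doubling constant of $(\mathbb{R}^n, d_Y)$.

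The only point requiring care is the bookkeeping of constants, and I expect no genuine obstacle there. Since the control function $\eta$ supplied by Theorem \ref{thm:quasisymmetry} depends only on $\beta$ and $H$, and the Euclidean doubling constant depends only on $n$, the resulting doubling constant for $(\mathbb{R}^n, d_Y)$ depends only on $\beta$, $n$, and $H$, as claimed. The substantive analytic content was already carried out in establishing the quasisymmetry, so what remains is an appeal to a textbook preservation result together with the verification that all constants depend only on the stated data.
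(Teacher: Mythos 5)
Your proposal is correct and matches the paper's argument exactly: the paper derives the corollary in one line from Theorem \ref{thm:quasisymmetry} together with the standard fact that the doubling property is preserved quantitatively under quasisymmetric maps. Your expanded justification of that preservation fact (via separated sets and the quasisymmetry of $\iota^{-1}$) is a correct elaboration of what the paper leaves as a citation-level appeal.
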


\section{Bi-Lipschitz embedding of Grushin spaces} \label{sec:bilip_embed_Grushin}

In this section we will prove Theorem \ref{thm:grushin_bilip_embed}, namely that any $(Y,\beta)$-Grushin space satisfying the H\"older condition is embeddable in Euclidean space. As mentioned earlier, this generalizes Seo's result that the classical Grushin plane is embeddable. 

\begin{proof}[Proof of Theorem \ref{thm:grushin_bilip_embed}]
We will apply Theorem \ref{thm:seo_generalization}. We have already verified the doubling hypothesis in Corollary \ref{cor:doubling}. It is also easy to establish that $Y$ itself is embeddable; this follows immediately from Assouad's embedding theorem \cite[Prop. 2.6]{Assouad} and the fact that $d_Y|Y$ is comparable to $d_E^{1-\beta}|Y$ by the H\"older condition together with Lemma \ref{lemm:distance_lower_bound}.

What remains is to verify the final hypothesis of Theorem \ref{thm:seo_generalization}, namely the existence of uniform bi-Lipschitz embeddings of the cubes of a Christ-Whitney decomposition. In this proof, we use the notation $B(x,r;d)$ to denote balls relative to the metric $d$.

Fix values of $\delta$, $c_0$, and $C_1$, say $\delta = 1/8$, $c_0=1/9$, and $C_1=2$. Consider a Christ-Whitney decomposition of $\mathbb{R}^n \setminus Y$ (with metric $d_Y$) with data $(\delta, c_0, C_1, a)$, where we take $a$ sufficiently large (to be specified later). We follow the notation of Definition \ref{defn:christ_whitney}. Let $Q$ be a Christ-Whitney cube, and $B = B(x,C_1\delta^k; d_Y)$. Take $M = C_1\delta^k$, $L = (\frac{a}{\delta} + 1)C_1\delta^k$, and $\ell = (1-\beta)^{1/(1-\beta)}L^{1/(1-\beta)}$. 

We claim there exists a $J = J(\beta, a)$ such that $B = B(x,M;d_Y) \subset B(x,J\ell; d_E)$ and $J \rightarrow 0$ as $a \rightarrow \infty$. To verify this, let $y \in B$. Let $\gamma$ be any path from $x$ to $y$; then
\begin{align*} 
\ell_Y(\gamma) \geq \int_0^{d_E(x,y)} \frac{ds}{(d_E(x,Y)+d_E(x,y))^\beta} \geq \frac{d_E(x,y)}{(d_E(x,Y) + d_E(x,y))^\beta}. 
\end{align*}
Since this is valid for any $\gamma$, we obtain 
$$M \geq d_Y(x,y) \geq \frac{d_E(x,y)}{(d_E(x,Y) + d_E(x,y))^\beta} \geq \frac{d_E(x,y)}{(\ell + d_E(x,y))^\beta}.$$

For the last inequality, notice that $d_Y(x,Y) \leq L$, which implies $d_E(x,Y) \leq \ell$ by Lemma \ref{lemm:point_metric}. Now $M = (\frac{a}{\delta}+1)^{-1}L = (\frac{a}{\delta}+1)^{-1}(1-\beta)^{-1}\ell^{1-\beta}$. Set $C_2 = (\frac{a}{\delta}+1)^{-1}(1-\beta)^{-1}$; we obtain 
$$1 \leq C_2\left(\frac{\ell}{d_E(x,y)}\right)^{1-\beta}\left(\frac{\ell}{d_E(x,y)} + 1\right)^\beta \leq C_2 \left( \frac{\ell}{d_E(x,y)} + 1\right).$$
It follows that, for sufficiently large $a$, 
$$0< \frac1{C_2}-1 \leq \frac{\ell}{d_E(x,y)} .$$ 
Taking $J = (1/C_2-1)^{-1}$ gives $d_E(x,y) \leq J\ell$; thus $y \in B(x, J\ell;d_E)$ as desired. Moreover, as $a \rightarrow \infty$ we have $C_2 \rightarrow 0$ and hence $J \rightarrow 0$.

Next we estimate the metric $d_Y|B$. Let $y,z \in B$; notice the straight-line path between them lies in $B(x,J\ell;d_E)$. Notice as well that 
$$d_E(x,Y) \geq (a-2)^{1/(1-\beta)} \left(\frac{a}{\delta}+1\right)^{-1/(1-\beta)} \ell .$$
Let $C_3 = (a-2)^{1/(1-\beta)} (\frac{a}{\delta}+1)^{-1/(1-\beta)}$. Observe that $\lim_{a\rightarrow \infty} C_3 = \delta^{1/(1-\beta)}$; hence by taking $a$ to be sufficiently large we may ensure that $C_3 - J$ is positive. Fix such a value $a = a(\beta)$; notice that $a$ is independent of $Y$ and the dimension $n$. We obtain
$$d_Y(y,z) \leq \frac{d_E(y,z)}{(C_3\ell-J\ell)^\beta} = \frac{1}{(C_3-J)^\beta} \frac{d_E(y,z)}{\ell^\beta}. $$
On the other hand, let $\gamma$ be any path from $y$ to $z$. Then
$$\ell_Y(\gamma) \geq \frac{d_E(y,z)}{(\ell+2J\ell)^\beta} =  \frac{1}{(1+2J)^\beta} \frac{d_E(y,z)}{\ell^\beta}.$$
Combining these facts gives
$$ \frac{1}{(1+2J)^\beta} d_E(y,z) \leq \ell^\beta d_Y(y,z) \leq \frac{1}{(C_3-J)^\beta} d_E(y,z).$$
Define $f_B: (B,d_Y) \rightarrow (\mathbb{R}^n, d_E)$ by $f_B(y) = \ell^{-\beta} y$. It is straightforward to check that $f_B$ is bi-Lipschitz with constant depending only on $\beta$. Thus we obtain uniform embeddings of the cubes of a Christ-Whitney decomposition as required by hypothesis (3) in Theorem \ref{thm:seo_generalization}. 
\end{proof}


Note that the H\"older condition is used in two ways in proving Theorem \ref{thm:grushin_bilip_embed}: first, it implies that $(Y,d_Y|Y)$ is a $(1-\beta)$-snowflake of $(Y,d_E)$ and hence that $Y$ is bi-Lipschitz embeddable; second, it is used in the proof of quasisymmetry (Theorem \ref{thm:quasisymmetry}), which implies that $(\mathbb{R}^n, d_Y)$ is doubling. On the other hand, it is not used in constructing the embeddings of Christ-Whitney cubes, and we leave as an open question to what extent the Ho\"lder condition in Theorem \ref{thm:grushin_bilip_embed} may be relaxed.

While Theorem \ref{thm:grushin_bilip_embed} does not attempt to optimize the target dimension, in some simple cases this can be obtained from known results. We give one such example; for its statement, we use the term {\it $\epsilon$-snowflake line} in $\mathbb{R}^2$ for the image of $(\mathbb{R}, |\cdot|^\epsilon)$ under a bi-Lipschitz map into $\mathbb{R}^2$ (relative to the Euclidean metric). Notice that necessarily $\epsilon \in (1/2,1]$. If $\epsilon=1$, this is the more familiar {\it chord-arc line} condition.

\begin{thm}\label{thm:embed_ex1}
Let $Y \subset \mathbb{R}^2$ be an $\epsilon$-snowflake line for some $\epsilon \in (1/2,1]$, let $\widetilde{\beta} = 1-\epsilon$, and let $\widetilde{\alpha} = (\widetilde{\beta}+ \beta - \widetilde{\beta} \beta)/(1-\widetilde{\beta} - \beta + \widetilde{\beta} \beta)$. Then for all $\beta \in [0,1)$, the $(Y,\beta)$-Grushin plane is bi-Lipschitz embeddable in $\mathbb{R}^{[\widetilde{\alpha}]+2}$. 
\end{thm}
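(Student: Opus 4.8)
The plan is to reduce the statement to the sharp embedding result for the standard $\alpha$-Grushin plane proved by the author and Vellis in \cite{RV}, namely that $\mathbb{G}^2_\alpha$ embeds bi-Lipschitzly in $\mathbb{R}^{[\alpha]+2}$. Concretely, I would show that the $(Y,\beta)$-Grushin plane is bi-Lipschitz equivalent to $\mathbb{G}^2_{\widetilde\alpha}$ and then invoke \cite{RV}. The first step is bookkeeping of exponents. Writing $B = 1-\epsilon(1-\beta)$ and recalling $\widetilde\beta = 1-\epsilon$, one checks that the numerator of $\widetilde\alpha$ equals $1-(1-\widetilde\beta)(1-\beta)=B$ and its denominator equals $(1-\widetilde\beta)(1-\beta)=1-B$, so that $\widetilde\alpha = B/(1-B)$. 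Under the correspondence $\beta \leftrightarrow \alpha/(1+\alpha)$ recalled in Section \ref{sec:defi_grushin_spaces}, this means $\mathbb{G}^2_{\widetilde\alpha}$ is, up to rescaling of the metric, the $(Y_0,B)$-Grushin plane for a straight line $Y_0$, and the key relation is simply that the two snowflaking exponents multiply: $1-B = \epsilon(1-\beta)$.

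The second step is to match the singular sets as metric spaces. By the H\"older condition together with Lemma \ref{lemm:distance_lower_bound} (applied with $c=0$), the restriction $d_Y|_Y$ is bi-Lipschitz to $d_E^{1-\beta}|_Y$; that is, $(Y,d_Y|_Y)$ is the $(1-\beta)$-snowflake of $(Y,d_E)$. Since $Y$ is an $\epsilon$-snowflake line there is a bi-Lipschitz parametrization $g\colon(\mathbb{R},|\cdot|^\epsilon)\to(Y,d_E)$, and composing the two snowflakings gives a bi-Lipschitz identification of $(Y,d_Y|_Y)$ with $(\mathbb{R},|\cdot|^{\epsilon(1-\beta)})=(\mathbb{R},|\cdot|^{1-B})$. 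The same computation with $\beta$ replaced by $B$ and $Y$ by $Y_0$ shows that the singular line of $\mathbb{G}^2_{\widetilde\alpha}$ is likewise bi-Lipschitz to $(\mathbb{R},|\cdot|^{1-B})$. Thus the two singular sets agree as metric spaces, with $s\mapsto g(s)$ realizing the correspondence.

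The third step is to extend this identification to a global bi-Lipschitz homeomorphism $\Phi$ from the $(Y_0,B)$-Grushin plane to the $(Y,\beta)$-Grushin plane. Here I would use a collar (tubular-neighborhood) construction: a point at Euclidean height $\tau$ over $Y_0$ should be sent to a point lying over $g(s)$ at Euclidean distance comparable to $|\tau|^\epsilon$ from $Y$, the exponent $\epsilon$ being forced by Lemma \ref{lemm:point_metric}, which requires $|\tau|^{1-B}\approx d_E(\Phi(s,\tau),Y)^{1-\beta}$. Since $Y$ is a quasiline it separates $\mathbb{R}^2$ into two uniform domains, so $\Phi$ can be built side by side from a Whitney decomposition adapted to $Y$; one then verifies that $\Phi$ is bi-Lipschitz by comparing conformal densities cube by cube (each cube being essentially a rescaled Euclidean cube, exactly as in the proof of Theorem \ref{thm:grushin_bilip_embed}) and controlling the overlaps. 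That such a map can exist is confirmed by a counting check: the number of internal-scale-$r$ Whitney cubes within internal distance $R$ of the singular set is comparable to $(R/r)^{1/(1-B)}$ in both planes, the larger Hausdorff dimension $1/\epsilon$ of $Y$ being exactly offset by the smaller Grushin exponent $\beta$.

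The main obstacle is the construction and estimation of $\Phi$ near the singular set, where $Y$ may be genuinely fractal: there is no well-defined normal direction along $Y$, so the collar map must be built through the Whitney and corkscrew structure of the complementary uniform domains, and its bi-Lipschitz property verified directly from the density comparison rather than from any regularity of $Y$. Once $\Phi$ is in hand, bi-Lipschitz equivalence of the $(Y,\beta)$-Grushin plane with $\mathbb{G}^2_{\widetilde\alpha}$ follows, and \cite{RV} yields the embedding into $\mathbb{R}^{[\widetilde\alpha]+2}$.
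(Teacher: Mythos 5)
Your reduction target and exponent bookkeeping are exactly right: the paper's proof likewise shows that the $(Y,\beta)$-Grushin plane is bi-Lipschitz equivalent to the $(\widetilde{Y},B)$-Grushin plane with $B=\widetilde{\beta}+\beta-\widetilde{\beta}\beta$ (where $\widetilde{Y}$ is a straight line), identifies that space with $\mathbb{G}^2_{\widetilde{\alpha}}$ for $\widetilde{\alpha}=B/(1-B)$, and quotes \cite{RV} for the embedding into $\mathbb{R}^{[\widetilde{\alpha}]+2}$. The gap is your third step, which you yourself flag as ``the main obstacle'': you never construct the global map $\Phi$, and the collar-plus-Whitney scheme you outline is the genuinely hard content of the theorem, not a verification. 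Matching the singular sets metrically (your step 2) and checking that Whitney-cube counts agree at every scale are necessary conditions but far from sufficient: a map defined piece by piece on Whitney cubes, each piece bi-Lipschitz after rescaling, need not be globally bi-Lipschitz, or even injective, without quantitative control of how the pieces fit together across overlaps. For a genuinely fractal snowflake line there is no normal direction along $Y$, only corkscrew access through the complementary uniform domains, and solving this gluing problem is precisely the technical heart of \cite{RV} (in the spirit of bi-Lipschitz parametrization results for snowflake curves); your proposal supplies no mechanism for it, so as written the argument has a hole exactly where the theorem's content lies.

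What you are missing is that this hard step is already available as a quotable result: \cite[Cor.\ 1.3]{RV} provides a global bi-Lipschitz map $\Gamma$ from $(\mathbb{R}^2,d_{\widetilde{Y},\widetilde{\beta}})$, the $(\widetilde{Y},\widetilde{\beta})$-Grushin plane, onto $(\mathbb{R}^2,d_E)$ with $\Gamma(\widetilde{Y})=Y$ --- the $\epsilon$-snowflake-line hypothesis on $Y$ is exactly the hypothesis of that corollary, with $\epsilon=1-\widetilde{\beta}$. Given $\Gamma$, the paper finishes with a short conformal-density computation instead of a new construction: Lemma \ref{lemm:point_metric} gives $d_E(\Gamma(\cdot),Y)\simeq d_{\widetilde{Y},\widetilde{\beta}}(\cdot,\widetilde{Y})\simeq d_E(\cdot,\widetilde{Y})^{1-\widetilde{\beta}}$, and since $\int_\gamma d_E(\cdot,\widetilde{Y})^{-\widetilde{\beta}}\,ds_E\simeq \ell_E(\Gamma\circ\gamma)$ for every path $\gamma$, multiplying the density by $d_E(\cdot,\widetilde{Y})^{-(1-\widetilde{\beta})\beta}\simeq d_E(\Gamma(\cdot),Y)^{-\beta}$ yields $\int_\gamma d_E(\cdot,\widetilde{Y})^{-B}\,ds_E\simeq \int_{\Gamma\circ\gamma} d_E(\cdot,Y)^{-\beta}\,ds_E$; that is, the same map $\Gamma$ is bi-Lipschitz from the $(\widetilde{Y},B)$-Grushin plane to the $(Y,\beta)$-Grushin plane. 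So the repair is not to build your $\Phi$ from scratch but to reuse $\Gamma$ and upgrade it by this density comparison; with that substitution, your steps 1 and 2 combine with the endgame you intended to give a complete proof.
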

\begin{proof}
This follows from the results in \cite{RV}. Let $\widetilde{\beta} = 1-\epsilon$ and let $\widetilde{Y} = \{(0,v): v \in \mathbb{R}\}$. There exists a global bi-Lipschitz map $\Gamma: (\mathbb{R}^2, d_{\widetilde{Y},\widetilde{\beta}}) \rightarrow (\mathbb{R}^2, d_E)$ such that $\Gamma(\widetilde{Y}) = Y$; this is a restatement of \cite[Cor. 1.3]{RV} in terms of our new notation for Grushin-type surfaces. We claim that $\Gamma$ is also a global bi-Lipschitz map between the $(\widetilde{Y}, \widetilde{\beta} + \beta- \widetilde{\beta} \beta)$-Grushin plane and the $(Y,\beta)$-Grushin plane. To check this, note that $d_E(\Gamma(\cdot), \widetilde{Y}) \simeq d_{\widetilde{Y},\widetilde{\beta}}(\cdot, \widetilde{Y}) \simeq d_E(\cdot, \widetilde{Y})^{1-\widetilde{\beta}}$ by Lemma \ref{lemm:point_metric}. Here we write $a \simeq b$ to denote comparability: $a,b \geq 0$ and there exists $C$ such that $C^{-1}b \leq a \leq Cb$. For any path $\gamma$ in $\mathbb{R}^2$, we have
$$\int_{\gamma} \frac{ds_E}{d_E(\cdot, \widetilde{Y})^{\widetilde{\beta}}} \simeq \int_{\Gamma \circ\gamma} ds_E.$$
It follows from this that
$$\int_\gamma \frac{ds_E}{d_E(\cdot, \widetilde{Y})^{(1-\widetilde{\beta})\beta + \widetilde{\beta}}} \simeq \int_{\Gamma \circ \gamma} \frac{ds_E}{d_E(\cdot, Y)^\beta} $$
for any path $\gamma$ in $\mathbb{R}^2$. This shows that $\Gamma$ is bi-Lipschitz as a mapping from the $(\widetilde{Y}, \widetilde{\beta} + \beta- \widetilde{\beta} \beta)$-Grushin plane to the $(Y,\beta)$-Grushin plane. The $(\widetilde{Y}, \widetilde{\beta} + \beta- \widetilde{\beta} \beta)$-Grushin plane is isometric (up to rescaling) to the usual Grushin plane $\mathbb{G}_{\widetilde{\alpha}}^2$, where $\widetilde{\alpha} = (\widetilde{\beta}+ \beta - \widetilde{\beta} \beta)/(1-\widetilde{\beta} - \beta + \widetilde{\beta} \beta)$.  This space is shown to be embeddable in $\mathbb{R}^{[\widetilde{\alpha}]+2}$ in \cite{RV}.
\end{proof}  


\section{Embedding via curvature bounds} \label{sec:curvature_bounds}

This section is dedicated to an alternative approach to applying Seo's embbedability criterion (Theorem \ref{thm:seo_generalization}) based upon curvature bounds in the sense of Alexandrov geometry. We will give the proof of Theorem \ref{thm:embedding_theorem}, stated in the introduction, and use this result to give a simpler proof of Theorem \ref{thm:grushin_bilip_embed} in the two-dimensional case. Our work is based on the following theorem, which is a local version of the main result in Eriksson-Bique \cite{Eriksson} and can be established by the same methods.

\begin{thm}[Eriksson-Bique \cite{Eriksson}]\label{thm:compact_embed}
Let $n \in \mathbb{N}$, $\kappa > 0$. There exist constants $N = N(n), L = L(n)$, such that, for any Alexandrov space of bounded curvature $(X,d)$ of dimension $n$ and $|\curv X| \leq \kappa$, any ball $B(x,r)$ satisfying $$r \leq \min\{d(x, \overline{X} \setminus X)/2, \kappa^{-1/2}\}$$ admits an $L$-bi-Lipschitz embedding in $\mathbb{R}^N$. Here $\overline{X}$ is the metric completion of $X$.
\end{thm}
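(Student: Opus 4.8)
The statement is a localized form of the main embedding theorem of Eriksson-Bique \cite{Eriksson}, and the plan is to extract it from the same circle of ideas rather than to rebuild that machinery. The first move is to \emph{rescale}: since bi-Lipschitz embeddability is invariant under multiplying the metric by a constant, replacing $d$ by $\sqrt{\kappa}\,d$ reduces to the normalized case $\kappa=1$. We are then asked to embed a ball $B(x,r)$ with $r \le \min\{d(x,\overline{X}\setminus X)/2,\,1\}$ in a space with $|\curv X| \le 1$, and it is precisely this normalization that forces $N$ and $L$ to depend on $n$ alone. The hypothesis $r \le d(x,\overline{X}\setminus X)/2$ guarantees that $B(x,2r)$ is an interior region, so that no boundary effects interfere; invoking the regularity theory for spaces of two-sided bounded curvature (Berestovskii--Nikolaev), this interior region carries a Riemannian structure of controlled regularity with $|\sec|\le 1$, and comparison geometry may be run on $B(x,r)$.

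The geometric heart is a quantitative \emph{almost-flatness} estimate at the given scale. Since $r \le 1$ lies below the conjugate radius, which is $\ge \pi$ when $\sec \le 1$, for each $p \in B(x,r)$ the exponential chart $\exp_p$ is a local diffeomorphism whose metric distortion is bounded above and below by a constant depending only on $n$, by Rauch comparison. I would record this as the basic building block: every sufficiently small ball around $p$ is $C(n)$-bi-Lipschitz to a Euclidean ball, so locally the metric is uniformly close to Euclidean.

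The main obstacle is that these charts are only \emph{local}. Two-sided curvature bounds alone give no lower bound on the injectivity radius — a thin bounded-curvature neck already produces arbitrarily short geodesic loops at scale $r$ — so a single exponential chart need not cover $B(x,r)$, and $\exp_x$ itself may be badly non-injective. The whole difficulty is therefore to assemble the local Euclidean charts into one global embedding while keeping the target dimension and distortion dependent on $n$ alone, and this is exactly what the construction of \cite{Eriksson} accomplishes. The plan is to import it: cover $B(x,r)$ by a bounded-multiplicity family of nice charts, the multiplicity being controlled through the doubling property of bounded-curvature balls (which depends only on $n$ via Bishop--Gromov volume comparison at scales $\le \kappa^{-1/2}$), organize these charts into a dimensionally bounded number of well-separated families, and then combine the families by iterating the finite gluing theorem of Lang and Plaut (Theorem \ref{thm:lang_plaut1}), each step costing only a controlled increase in dimension and distortion.

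I expect the genuinely delicate point to be the quantitative bookkeeping in this assembly: ensuring that the multi-scale cover, the separation of charts, and the successive gluings interact so that the accumulated distortion does not depend on the uncontrolled injectivity radius nor on the (possibly very large) total number of charts, but only on $n$. This is where the work of \cite{Eriksson} does the heavy lifting; the role of the present statement is simply to observe that the argument is entirely local in nature, and hence applies verbatim to an interior ball at scale $\kappa^{-1/2}$.
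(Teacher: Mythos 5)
Your proposal takes essentially the same route as the paper, which offers no independent argument for Theorem \ref{thm:compact_embed} but simply observes that it is a local version of the main result of Eriksson-Bique \cite{Eriksson} ``established by the same methods''---exactly your strategy of rescaling to $\kappa=1$ and importing that construction verbatim to an interior ball at scale $\kappa^{-1/2}$. Your supporting remarks (Berestovskii--Nikolaev regularity, the injectivity-radius obstruction, bounded-multiplicity charts combined via Lang--Plaut gluing) are accurate glosses on why the argument localizes, and nothing in your outline diverges from the intended reduction.
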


It seems most natural to state Theorems \ref{thm:embedding_theorem} and \ref{thm:compact_embed} in the language of Alexandrov geometry. We offer a quick review of the definitions; several sources including books by Alexander, Kapovich, and Petrunin \cite{AKP}, Bridson and Haefliger \cite{bh:npc}, and Burago, Burago, and Ivanov \cite{bbi:metric} contain detailed introductions. For brevity we omit definitions of the terms {\it geodesic triangle}, {\it model space of constant curvature $\kappa$}, {\it comparison triangle}, and {\it comparison points}. 
An {\it (Alexandrov) space of bounded curvature $\kappa$}, where $\kappa \geq 0$, is a locally compact length metric space $(X,d)$ such that all local geodesics (locally length minimizing paths) can be locally extended and such that $-\kappa \leq \curv_x X \leq \kappa$ for all $x \in X$. The statement that $-\kappa \leq \curv_x X \leq \kappa$ means that there exists 
a neighborhood $U_x$ of $x$ with $\diam U_x < \pi/(2\sqrt{\kappa})$ such that $(U_x, d|_{U_x\times U_x})$ is a geodesic space and such that the following triangle comparison inequality holds: Let $\Delta$ be a geodesic triangle, $\overline{\Delta} \subset M_{\kappa}^2$ and $\underline{\Delta} \subset M_{-\kappa}^2$ be comparison triangles in the corresponding model spaces of constant curvature $(M_{\kappa}, d_{\kappa})$ and $(M_{-\kappa}, d_{-\kappa})$. Then for all $y,z \in \Delta$ and comparison points $\overline{y}, \overline{z} \in \overline{\Delta}$ and $\underline{y}, \underline{z} \in \underline{\Delta}$, $d_{-\kappa}(\underline{y}, \underline{z}) \leq d(y,z) \leq d_{\kappa}(\overline{y}, \overline{z})$. 

Note that for any point $x$ in a space of bounded curvature and for all $r>0$ sufficiently small, the ball $B(x,r)$ is convex (see \cite[Prop. II.1.4]{bh:npc}), and so the intrinsic metric on this ball is the same as the original metric $d$. One can show from this observation that any connected open subset of a space of bounded curvature $\kappa$ is still a space of bounded curvature $\kappa$, when equipped with the intrinsic metric on that subset.  

In the statements of Theorem \ref{thm:embedding_theorem} and Theorem \ref{thm:compact_embed}, there is no essential loss of generality by considering only smooth Riemannian manifolds instead of spaces of bounded curvature. This is due to a theorem of Nikolaev that the metric on a space of bounded curvature can be bi-Lipschitz approximated with arbitrarily small distortion by a smooth Riemannian metric \cite{Nikolaev91}. In the smooth Riemannian case, it holds that $\curv_x X \leq \kappa$ or $\curv_x X \geq \kappa$ for all $x \in X$ if and only if all sectional curvatures satisfy the same inequality.

Spaces of bounded curvature provide a natural setting to study bi-Lipschitz embeddings since local embeddability is immediate. Hence one needs only to consider the global embeddability of the space. Indeed, assumptions involving some notion of bounded curvature have figured prominently in several papers on the topic, including papers by Lang and Plaut \cite{lp:2001} and Bonk and Lang \cite{BonkLang}, and the aforementioned paper of Eriksson-Bique \cite{Eriksson}.

We proceed now with the proof of Theorem \ref{thm:embedding_theorem}.
The idea is straightforward: we use Theorem \ref{thm:compact_embed} to obtain uniform embeddings of the cubes in a sufficiently fine Christ-Whitney decomposition, and then apply Theorem \ref{thm:seo_generalization}. 

\begin{proof}[Proof of Theorem \ref{thm:embedding_theorem}]
We use the notation in Definition \ref{defn:christ_whitney}. Fix values of $\delta$, $c_0$, and $C_1$, say $\delta = 1/8$, $c_0=1/9$, and $C_1=2$. Let $a = 3A^{1/2} + 5$, where $A$ comes from the Whitney curvature bounds (\ref{equ:curvature_growth}), and consider a Christ-Whitney decomposition of $\Omega = X \setminus Y$ with data $(\delta, c_0, C_1, a)$. For a given Christ-Whitney cube $Q$, let $r = C_1\delta^k$, $R = 3C_1\delta^k$, and $B = B(x,R)$, where $x$ and $k$ are as in property (3) of Definition \ref{defn:christ_whitney}. The requirement that $a-5 \geq 3A^{1/2}$ implies that
$$d(B,Y) \geq d(x,Y) - 3C_1\delta^k \geq (a-2)C_1\delta^k - 3C_1\delta^k = (a-5)C_1\delta^k. $$
In particular, $B$ (equipped with the intrinsic metric) is an Alexandrov space with $\curv B \leq \kappa$ for 
$$\kappa = \frac{A}{(a-5)^2C_1^2\delta^{2k}} \leq \frac{1}{9C_1^2\delta^{2k}} = \frac{1}{R^2}. $$
Moreover, by the completeness of $X$ we see that $3r \leq d(x, \overline{B}\setminus B)$ (recall that $\overline{B}$ is the metric completion of $B$). 
Therefore the ball $B(x,r)$ satisfies the requirements of Theorem \ref{thm:compact_embed}.  Moreover, for any two points in $B(x,r)$, the geodesic connecting them must stay in $B$. This implies that the intrinsic metric on $B(x,r)$ in $B$ is identical to the original metric. We conclude that $B(x,r)$ admits an $L$-bi-Lipschitz embedding in $\mathbb{R}^N$, with $L$ and $N$ independent of the cube $Q$. By Theorem \ref{thm:seo_generalization}, this suffices to prove the embeddability of $X$.

The quantitative statements on the target dimension and bi-Lipschitz constant are easy to establish.  The constants $L$ and $N$ depend only on the dimension $n$, and the data of the Christ-Whitney decomposition depends only on $A$. 
Uniform bounds on target dimension and distortion now follow by Theorem \ref{thm:seo_generalization}.
\end{proof} 

We remark that the doubling hypothesis cannot be removed altogether from Theorem \ref{thm:embedding_theorem}. One may consider infinitely many Grushin half-planes emanating from a common singular line. This space would satisfy the other hypotheses but not be embeddable. 

Theorem \ref{thm:embedding_theorem} leads to a simpler alternative proof of Theorem \ref{thm:grushin_bilip_embed} in the two-dimensional case. As a warm-up, we illustrate this approach with the $\alpha$-Grushin plane.  
We recall that for a Riemannian manifold $X$, $\curv_x X \leq \kappa$ or $\curv_x X \geq \kappa$ if and only if the corresponding inequality holds for all sectional curvatures at $x$.
In the two-dimensional case sectional curvature coincides with Gaussian curvature, which we denote by $K$, and we can use Brioschi's formula valid for a Riemannian metric $ds^2 = Edx^2 + Gdy^2$:
\begin{equation} \label{equ:brioschi}
K = \frac{-1}{2\sqrt{EG}} \left( \left( \frac{G_x}{\sqrt{EG}}\right)_x + \left(\frac{E_y}{\sqrt{EG}}\right)_y \right).
\end{equation}
Here and in the following proposition we use $(x,y)$ again for coordinates in $\mathbb{R}^2$.
Note that this computation is carried out in \cite[Thm. 8.5]{dhlt:2014} for the case $\alpha=1$.

\begin{prop} \label{thm:grushin_embed}
The $\alpha$-Grushin plane is bi-Lipschitz embeddable in some Euclidean space for all $\alpha \geq 0$.  
\end{prop}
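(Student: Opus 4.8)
The plan is to apply Theorem \ref{thm:embedding_theorem} with the singular line as the singular set $Y$, so the bulk of the work is verifying its three hypotheses. It is cleanest to pass to the conformal coordinates $(u,v)$ introduced above, in which the $\alpha$-Grushin plane becomes, up to a constant rescaling of the metric, the $(Y,\beta)$-Grushin plane with $Y = \{(u,v) : u = 0\}$ and $\beta = \alpha/(1+\alpha)$. When $\alpha = 0$ the metric is Euclidean and there is nothing to prove, so I assume $\alpha > 0$, i.e. $\beta \in (0,1)$.

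For hypothesis (1), note that $\mathbb{R}^2 \setminus Y$ consists of two open half-planes, each of which is a uniform domain, and $\overline{\mathbb{R}^2 \setminus Y} = \mathbb{R}^2$. Hence Proposition \ref{prop:distance_upper_bound} gives the H\"older condition, and Corollary \ref{cor:doubling} then yields that the space is doubling. For hypothesis (2), the singular set $Y$ is a line; by Lemma \ref{lemm:point_metric} together with the H\"older condition, $d_Y|Y$ is comparable to $d_E^{1-\beta}|Y$, so $(Y, d_Y)$ is bi-Lipschitz to a $(1-\beta)$-snowflake of $\mathbb{R}$ and hence embeds in Euclidean space by Assouad's theorem.

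The substance of the proof is hypothesis (3), the Whitney curvature bounds. Away from $Y$ the metric is the smooth conformal metric $ds^2 = \lambda^2 (du^2 + dv^2)$ with $\lambda = c\,|u|^{-\beta}$ for a constant $c = (1+\alpha)^{-\beta}$, so I would compute its Gaussian curvature either from Brioschi's formula (\ref{equ:brioschi}) with $E = G = \lambda^2$ or, more efficiently, from the conformal curvature formula $K = -\lambda^{-2}\Delta(\log \lambda)$. Since $\log \lambda$ depends only on $u$, a direct computation gives $\Delta(\log\lambda) = -\beta\,\partial_u^2 \log|u| = \beta/u^2$, so that
$$K = -\frac{\beta}{c^2}\,|u|^{2\beta - 2}.$$
The point is that $d_E(x,Y) = |u|$, so by Lemma \ref{lemm:point_metric} one has $d_Y(x,Y)^{-2} = (1-\beta)^2 |u|^{2\beta-2}$, which carries exactly the same power of $|u|$ as $K$. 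Thus $|K| = A\, d_Y(x,Y)^{-2}$ for the constant $A = \beta c^{-2}(1-\beta)^{-2}$, and in particular both inequalities in (\ref{equ:curvature_growth}) hold (the upper bound trivially, since $K \leq 0$). With all three hypotheses verified, Theorem \ref{thm:embedding_theorem} delivers the embedding.

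The step I expect to be the crux is the curvature computation and, more precisely, the observation that the homogeneity of $K$ in $|u|$ matches that of $d_Y(x,Y)^{-2}$; this exact matching of exponents is what makes the Whitney bounds hold with a single constant $A$ rather than merely on bounded scales. Everything else (doubling, the snowflake structure of the singular line, and the completeness and local bounded-curvature conditions needed internally to invoke Theorem \ref{thm:compact_embed}) is routine given the machinery already assembled.
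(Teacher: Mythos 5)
Your proof is correct and takes essentially the same route as the paper, which likewise applies Theorem \ref{thm:embedding_theorem} with the singular line as $Y$ (doubling plus an Assouad embedding of the snowflaked line $d((0,y_1),(0,y_2))=k(\alpha)|y_1-y_2|^{1/(1+\alpha)}$) and computes the Gaussian curvature $K=-\alpha(\alpha+1)|x|^{-2}$ via Brioschi's formula, working in the original Grushin coordinates rather than the conformal $(u,v)$ coordinates---a purely cosmetic difference, since the exponent matching with $d(\cdot,Y)^{-2}$ is the same observation in either chart. One small correction: the lower bound making $d_Y|Y$ comparable to $d_E^{1-\beta}|Y$ follows from Lemma \ref{lemm:distance_lower_bound} (with $c=0$), not from Lemma \ref{lemm:point_metric}, which only computes point-to-set distance.
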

\begin{proof}
Fix $\alpha \geq 0$, and take $Y = \{(x,y): x=0\}$ as the singular set. It is well-known that the Grushin plane is doubling, and that the metric on $Y$ satisfies $$d((0,y_1),(0,y_2))= k(\alpha)|y_1-y_2|^{1/(1+\alpha)}$$   
and hence is embeddable by Assouad \cite[Prop. 4.4]{Assouad}.


The remaining part is a computation of curvature on the Riemannian part of the $\alpha$-Grushin plane.  We have $E=1$ and $G= |x|^{-2\alpha}$ in Brioschi's formula (\ref{equ:brioschi}), giving $K = -\alpha(\alpha+1)|x|^{-2}$. 
We can appeal now to Theorem \ref{thm:embedding_theorem}.
\end{proof}

We proceed now with the full proof of the two-dimensional case of Theorem \ref{thm:grushin_bilip_embed}. It is similar in spirit to the proof of Proposition \ref{thm:grushin_embed}; however, we must account for the fact that the function $d_E(\cdot, Y)$ is not smooth in general. 

\begin{proof}[Proof of Theorem \ref{thm:grushin_bilip_embed} ($n=2$ case)]
This proof is an application of Theorem \ref{thm:embedding_theorem}. However, instead of proving embeddability for the $(Y,\beta)$-Grushin plane directly, we will prove the embeddability of a space $(\mathbb{R}^2, \widetilde{d})$ bi-Lipschitz homeomorphic to it. 
The metric $\widetilde{d}$ is defined exactly the same as the $(Y,\beta)$-Grushin metric, except that we replace $d_E(\cdot,Y)$ with a {\it regularized distance function} (see \cite[Sec. VI.2]{stein:1970}), which we denote by $\Delta(\cdot)$. The function $\Delta(\cdot)$ is comparable to $d_E(\cdot,Y)$, is $C^\infty$ in $\mathbb{R}^2 \setminus Y$, and satisfies
\begin{align*}
\left|\partial_{i_1, \ldots, i_k} \Delta(x)\right| \leq & C(i_1, \ldots, i_k) d_E(x,Y)^{1-k} 
\end{align*}
for all $i_1, \ldots i_k$ and all $x \in \mathbb{R}^2$ and constants $C(i_1, \ldots, i_k)$. The comparability constant and the coefficients $C(i_1, \ldots, i_k)$ are independent of $Y$. It is easy to see that $\widetilde{d}$ is bi-Lipschitz equivalent to the $(Y,\beta)$-Grushin metric. 

To apply Theorem \ref{thm:embedding_theorem}, we only need to verify that $(\mathbb{R}^2, \widetilde{d})$ satisfies the Whitney curvature bounds (\ref{equ:curvature_growth}). In the following, write $\Delta$ for $\Delta(x)$. We use Brioschi's formula (\ref{equ:brioschi}), noting that $E=G= \Delta^{-2\beta}$. This yields
\begin{align*} 
|K(x)| & \lesssim \frac{|\partial_1 \Delta|^2 + |\partial_2 \Delta|^2 +|\Delta|(|\partial_{1,1}\Delta|  + |\partial_{2,2}\Delta|)}{|\Delta|^{2(1-\beta)}} \\
& \lesssim \frac{1}{d_E(x,Y)^{2(1-\beta)}} \lesssim \frac{1}{d_Y(x,Y)^2} \lesssim \frac{1}{\widetilde{d}(x,Y)^2} .
\end{align*}
We have written $a \lesssim b$ here to mean that $a,b \geq0$ and $a \leq Cb$ for some constant $C$. This inequality verifies the Whitney curvature bounds and completes the proof.  Note that the implicit constant in this inequality depends only on $\beta$.  Finally, we remark that all the constants involved (included the doubling constant of the space and the target dimension and bi-Lipschitz constant of the embedding of $Y$) depend only on $\beta$ and $H$; this establishes the quantitative part of the theorem.
\end{proof}

While more elaborate, one may carry out the same sectional curvature computation in dimensions greater than two. This yields a bound on sectional curvature of the form
$$|\text{sec}_x(v,w)| \leq A_1 + A_2d_E(x, Y)^{-2}$$
for all $x \in \mathbb{R}^n \setminus Y$ and linearly independent $v,w \in T_x \mathbb{R}^n$. 
Hence the Whitney curvature bounds are satisfied within a fixed distance to $Y$ but may not be for a sequence of points $\{x_j\}$ with $d_E(x_j, Y)$ unbounded. This appears to be a limitation to the applicability of Theorem \ref{thm:embedding_theorem}.

We conclude this section with a non-embeddability example to illustrate the sharpness of the exponent of $-2$ in the Whitney curvature bounds (\ref{equ:curvature_growth}). This is a variation on the classical Grushin plane. 

\begin{exm}
Replace the original Grushin line element with 
$$ds^2 = dx^2 + e^{2/|x|^\epsilon} dy^2 $$
for some $\epsilon >0$. Using Brioschi's formula (\ref{equ:brioschi}) we compute 
$$K =-\epsilon(\epsilon + x^\epsilon(1+\epsilon))x^{-2(1+\epsilon)}.$$ 
So $|K|$ grows at order $\sim x^{-2(1+\epsilon)}$ as $x \rightarrow 0$. The resulting metric space is not doubling. Indeed, a calculation shows that the strip $[2^{-n-1}, 2^{-n}] \times [0,1]$, $n \geq 0$, contains $[2^{n+1}e^{2^{n \epsilon}}]$ disjoint balls of radius $2^{-n-2}$. (As before, $[\cdot]$ denotes the floor function.) The doubling property requires that the number of disjoint balls of radius $2^{-n-2}$ contained in $[0,1] \times [0,1]$ be bounded by $C^n$ for a fixed constant $C$, which does not hold in this case. A non-doubling space cannot be embedded in Euclidean space.
\end{exm}

\section{Proof of Theorem \ref{thm:seo_generalization}} \label{sec:seo_extension} 

This paper concludes with a proof of the stronger version of Seo's embedding theorem, stated in the introduction as Theorem \ref{thm:seo_generalization}.  This section runs very closely to Seo's original paper.  However, in the interest of making the present paper reasonably self-contained, and because we require a number of modifications, we present in full detail a number of her arguments.  

The original theorem had an additional hypothesis that the space be uniformly perfect.  A metric space $(X,d)$ is \textit{uniformly perfect} if there exists $A \geq 1$ with the property that, for all $0<r<\diam X$ and all $x \in X$, there is a point $y \in X$ such that $A^{-1} r \leq d(x,y) \leq r$.  For example, every connected metric space is uniformly perfect with constant $A=1$.  The hypothesis guarantees that the diameter of a ball is comparable to its radius, something which is not true in general.  Hence it is precisely use of this fact which we must avoid.    

Our starting point is the Christ-Whitney decomposition (Definition \ref{defn:christ_whitney}) found in Section \ref{sec:christ_whitney} of this paper.  Our statement differs from the corresponding one in Seo in inequality (\ref{equ:christ_distance}).  The analogous property there (Lemma 2.1 in \cite{seo}) reads 
$$ \diam Q \leq d(Q,X\setminus\Omega) \leq (4C_1A/\delta c_0)\diam Q,$$
where $A$ is the constant of uniform perfectness.  Clearly, the hypothesis of uniform perfectness is needed to obtain the lower bound on $\diam Q$. We note that this inequality is the only place in Seo's proof in which the uniform perfectness assumption is used. 

If the space in question is not uniformly perfect, the actual diameter of a cube $Q$ could be much smaller that $d(Q,X\setminus\Omega)$.  However, if $\diam Q$ is smaller than $c_0 \delta^k$, then $Q$ is isolated from any other cube in $M_\Omega$.  This suggests the strategy of proving Theorem \ref{thm:seo_generalization} by adding an additional point to each Christ-Whitney cube to increase its diameter if needed, and then leveraging Seo's existing proof to prove embeddability of this enlarged space.  On a heuristic level, this approach seems viable, since a non-uniformly perfect space has ``fewer'' points than a uniformly perfect space and so should be easier to embed. There is a technical complication, however, since we no longer know {\it a priori} that this enlarged space is doubling. 

Following this approach, we will define for each cube $Q \in M_\Omega$ the enlarged cube $\widetilde{Q}$.  Identify $X$ with an isometric copy in $\ell^\infty$ and set 
$$\widetilde{Q} = \left\{ \begin{array}{cc} Q \cup \{q\} & \text{ if } \diam Q < c_0 \delta^k/4 \\ Q & \text{ if } \diam Q \geq c_0 \delta^k/4 \end{array} \right. ,$$
where $q \in \ell^\infty$ is any point satisfying $d(q,x) = c_0 \delta^k/4$.  Here $x$ is the same point as in property (3) in Definition \ref{defn:christ_whitney}.  Note that $\widetilde{Q}$ satisfies $\diam \widetilde{Q} \geq c_0 \delta^k/4$.  Moreover, if $Q \neq \widetilde{Q}$, then $\diam \widetilde{Q} \leq c_0 \delta^k/2$.  

Define now $\widetilde{\Omega} = \bigcup \{\widetilde{Q}: Q \in M_\Omega\}$, $M_{\widetilde{\Omega}} = \{\widetilde{Q} : Q \in M_\Omega\}$, and $\widetilde{X} = X \cup \widetilde{\Omega}$.  Note that we have no \textit{a priori} guarantee that $\widetilde{\Omega}$ and $\widetilde{X}$ are doubling, but we will see that this is not an impediment to our proof.  Now, $M_{\widetilde{\Omega}}$ satisfies all the properties in Definition \ref{defn:christ_whitney} and so we may correctly regard $M_{\widetilde{\Omega}}$ as a Christ-Whitney decomposition of the new space $\widetilde{\Omega} \subset \widetilde{X}$.  

The following definitions and lemmas apply equally well to $M_\Omega$ and $M_{\widetilde{\Omega}}$.  However, for ease of notation we will suppress the tilde over elements of $M_{\widetilde{\Omega}}$ except when needed for clarity.  Fix a constant $0< \epsilon<1$.  For two cubes $Q,R \in M_\Omega$, we consider the relative distance $\Delta(Q,R)$, where 
$$\Delta(Q,R) = \frac{d(Q,R)}{\min\{\diam(Q), \diam(R)\}}.$$
Let $Q^* = \{R \in M_\Omega: \Delta(Q,R) < \epsilon\}$ and $Q^{**} = \bigcup \{ R^*\}_{R \in Q^*}.$  We call $Q^*$ the \textit{Whitney ball} of radius $\epsilon$ about $Q$.  We call $Q^{**}$ the \textit{iterated Whitney ball} of radius $\epsilon$ about $Q$.  As a technical point, note that $Q^*$ and $Q^{**}$ are sets of Christ-Whitney cubes rather than points in $X$; the actual sets of points they contain can be written as $\bigcup Q^*$ and $\bigcup Q^{**}$, respectively.  This definition is symmetric: $R \in Q^*$ if and only if $Q \in R^*$, and $R \in Q^{**}$ if and only if $Q \in R^{**}$.  

We now list some basic facts about $Q^*$ and $Q^{**}$.  First we make precise the observation that $Q$ is isolated from other balls when $\diam Q$ is small.

\begin{lemm}\label{prop:whitney_cube}
If $\diam Q \leq c_0 \delta^k/2$, then $Q^* = Q^{**} = \{Q\}$.  Moreover, $R \in Q^*$ for $Q \in M_\Omega$ if and only if $\widetilde{R} \in \widetilde{Q}^*$ for the corresponding $\widetilde{Q} \in M_{\widetilde{\Omega}}$.
\end{lemm}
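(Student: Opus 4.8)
The plan is to prove the isolation statement first and then deduce the correspondence as an essentially formal consequence, exploiting the fact that the enlargement procedure only ever modifies cubes that are already isolated.

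For the first claim I would argue by contradiction. Trivially $Q \in Q^*$. Suppose some $R \in M_\Omega$ with $R \neq Q$ satisfies $R \in Q^*$. By the definition of $Q^*$, together with $\epsilon < 1$ and the hypothesis $\diam Q \leq c_0\delta^k/2$, we get $d(Q,R) < \epsilon\min\{\diam Q, \diam R\} \leq \epsilon\,\diam Q < c_0\delta^k/2$. Choosing $q \in Q$ and $r \in R$ with $d(q,r) < c_0\delta^k/2$, and using that $x \in B(x,c_0\delta^k) \subset Q$ forces $d(q,x) \leq \diam Q \leq c_0\delta^k/2$, the triangle inequality yields $d(r,x) \leq d(r,q) + d(q,x) < c_0\delta^k$. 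Hence $r \in B(x,c_0\delta^k) \subset Q$, contradicting the disjointness $R \cap Q = \emptyset$ from property (2) of Definition \ref{defn:christ_whitney}. Therefore $Q^* = \{Q\}$, and then $Q^{**} = \bigcup_{R \in Q^*} R^* = Q^* = \{Q\}$.

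For the correspondence, the key observation is that $\widetilde{Q} \neq Q$ occurs exactly when $\diam Q < c_0\delta^k/4 \leq c_0\delta^k/2$, in which case the construction also gives $\diam\widetilde{Q} \leq c_0\delta^k/2$. Since $M_{\widetilde{\Omega}}$ is itself a Christ-Whitney decomposition (with the same center $x$ and scale $k$ for $\widetilde{Q}$, because $B(x,c_0\delta^k) \subset \widetilde{Q} \subset B(x,C_1\delta^k)$), the first claim applies verbatim in $M_{\widetilde{\Omega}}$. Thus whenever a cube is enlarged it is isolated in both decompositions: $Q^* = \{Q\}$ and $\widetilde{Q}^* = \{\widetilde{Q}\}$. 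I would then split into cases. If neither $Q$ nor $R$ is enlarged, then $\widetilde{Q} = Q$ and $\widetilde{R} = R$ as point sets, so $d(\widetilde{Q},\widetilde{R}) = d(Q,R)$ and the diameters coincide, giving $\Delta(\widetilde{Q},\widetilde{R}) = \Delta(Q,R)$ and hence the equivalence. If at least one of them is enlarged --- say $Q$, the other case following from the symmetry $R \in Q^* \Leftrightarrow Q \in R^*$ --- then $Q$ is isolated in both decompositions, so for $R \neq Q$ both memberships fail, while for $R = Q$ both hold.

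The step requiring the most care is transferring the first claim to $M_{\widetilde{\Omega}}$: I must confirm that $\widetilde{Q}$ genuinely satisfies the ball-sandwich property with the same center and scale (immediate, using $d(q,x) = c_0\delta^k/4 < C_1\delta^k$ for the added point) and that the enlarged cubes remain pairwise disjoint, both of which are subsumed in the assertion that $M_{\widetilde{\Omega}}$ is a Christ-Whitney decomposition of $\widetilde{\Omega}$. A minor additional point is that all distances are measured in $\widetilde{X} \subset \ell^\infty$ but restrict to the original metric on $X$, so the distance and diameter quantities for un-enlarged cubes are genuinely unchanged. Once these are in hand no further estimates are needed, since the combinatorics of the Whitney balls is literally identical in the two decompositions.
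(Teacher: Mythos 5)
Your proof is correct and takes essentially the same route as the paper: the isolation claim rests on the same three ingredients ($x\in B(x,c_0\delta^k)\subset Q$, $\diam Q\leq c_0\delta^k/2$, and disjointness of cubes), with the paper merely bounding $d(Q,R)\geq c_0\delta^k/2>\epsilon\,\diam Q$ directly instead of arguing by contradiction. Your case analysis for the second statement is a fleshed-out version of the paper's one-line observation that if $R\in Q^*$ or $\widetilde{R}\in\widetilde{Q}^*$ with $R\neq Q$ then $R=\widetilde{R}$ (enlarged cubes being isolated in both decompositions, by the first claim applied in $M_{\widetilde{\Omega}}$), so no further comment is needed.
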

\begin{proof}
In the case that $\diam Q \leq c_0 \delta^k/2$, we have $Q \subset B(x, c_0 \delta^k/2)$, where $x \in Q$ comes from property (3) of Definition \ref{defn:christ_whitney}.  Now for any cube $R \neq Q$, $R \cap B(x, c_0 \delta^k) = \emptyset$.  We conclude that $d(Q,R) \geq c_0 \delta^k/2 > \epsilon \diam Q$, whence $\Delta(Q,R) > \epsilon$.  

An implication is that if $R \in Q^*$ or $\widetilde{R} \in \widetilde{Q}^*$ for some $R \neq Q$, then it must be the case that $R = \widetilde{R}$.  The second statement follows.
\end{proof}
The next two lemmas correspond to Proposition 2.2 and Proposition 2.3, respectively, in \cite{seo}. The first of these shows that nearby cubes have comparable diameters.
\begin{lemm}\label{prop:diameter}
Let $Q \in M_\Omega$.  Then for all $R \in Q^*$,
$$\displaystyle \frac{a-2}{2}\left(1 + \frac{aC_1}{2\delta c_0} + \epsilon \right)^{-1}\diam R \leq \diam Q \leq \frac{2}{a-2}\left(1 + \frac{aC_1}{2\delta c_0} + \epsilon \right)\diam R .$$
\end{lemm}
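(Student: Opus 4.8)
The plan is to reduce the lemma to a single inequality and then to supply, by means of Lemma \ref{prop:whitney_cube}, the lower bound on $\diam Q$ that in Seo's setting was furnished for free by uniform perfectness. First observe that the two displayed bounds are interchanged by the substitution $Q \leftrightarrow R$: since $R \in Q^*$ if and only if $Q \in R^*$, it suffices to establish
$$\diam R \le \frac{2}{a-2}\left(1 + \frac{aC_1}{2\delta c_0} + \epsilon\right)\diam Q$$
for every $Q \in M_\Omega$ and every $R \in Q^*$. This inequality is the left-hand bound of the lemma, and the right-hand bound is the same statement with the roles of $Q$ and $R$ reversed. The case $R = Q$ is immediate because the constant on the right exceeds $1$, so I assume $R \neq Q$ throughout.

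Write $\delta^k$ and $\delta^l$ for the scales attached to $Q$ and $R$ by property (3) of Definition \ref{defn:christ_whitney}, so that $\diam Q \le 2C_1\delta^k$ and $\diam R \le 2C_1\delta^l$, while inequality (\ref{equ:christ_distance}) controls $d(Q, X\setminus\Omega)$ and $d(R, X\setminus\Omega)$ by their respective scales. The decisive point is a lower bound for $\diam Q$. Since $R \in Q^*$ with $R \neq Q$, the set $Q^*$ is not $\{Q\}$, so the contrapositive of Lemma \ref{prop:whitney_cube} gives $\diam Q > c_0\delta^k/2$, whence $\delta^k < 2\diam Q/c_0$; combined with the upper bound in (\ref{equ:christ_distance}) this yields $d(Q, X\setminus\Omega) \le (2aC_1/\delta c_0)\diam Q$, with the precise numerical constant to be tracked carefully. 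Next I transfer this control to $R$ using that $p \mapsto d(p, X\setminus\Omega)$ is $1$-Lipschitz: choosing a point of $\overline{Q}$ nearly realizing $d(Q, X\setminus\Omega)$ and bounding its distance to $R$ by $\diam Q + d(Q,R)$ gives
$$d(R, X\setminus\Omega) \le d(Q, X\setminus\Omega) + \diam Q + d(Q,R).$$
Because $R \in Q^*$ we have $d(Q,R) < \epsilon\min\{\diam Q, \diam R\} \le \epsilon\,\diam Q$, so the right-hand side is at most a constant times $\diam Q$. Finally the lower bound $d(R, X\setminus\Omega) \ge (a-2)C_1\delta^l$ together with $\diam R \le 2C_1\delta^l$ gives $\diam R \le \tfrac{2}{a-2}\,d(R, X\setminus\Omega)$, and assembling the three estimates produces the claimed inequality.

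The main obstacle is exactly the absence of a diameter lower bound: without uniform perfectness a Christ cube of scale $\delta^k$ can have arbitrarily small diameter, so the naive comparison of $\diam Q$ with $\delta^k$ breaks down. Lemma \ref{prop:whitney_cube} sidesteps this by showing that any anomalously small cube is Whitney-isolated, i.e. $Q^* = \{Q\}$; consequently the bound $\diam Q > c_0\delta^k/2$ is available precisely for those cubes that actually possess a Whitney neighbor, which is the only situation in which it is needed. The remaining effort is the bookkeeping of the constants $\delta, c_0, C_1, a, \epsilon$ to match the stated form, and I expect this accounting — rather than any conceptual difficulty — to be the most laborious part.
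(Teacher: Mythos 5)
Your proposal is correct and follows the paper's own proof essentially line for line: reduce to one inequality via the symmetry of the star relation, use the contrapositive of Lemma \ref{prop:whitney_cube} to get $\diam Q \geq c_0\delta^k/2$ and hence $d(Q,X\setminus\Omega) \leq (2aC_1/\delta c_0)\diam Q$, transfer this to $R$ via the $1$-Lipschitz distance function together with $d(Q,R) \leq \epsilon\,\diam Q$, and close with $\diam R \leq \tfrac{2}{a-2}\,d(R,X\setminus\Omega)$ from (\ref{equ:christ_distance}). One note on the bookkeeping you deferred: tracked carefully, your estimates give the middle term $2aC_1/(\delta c_0)$ rather than the stated $aC_1/(2\delta c_0)$ --- a factor-of-$4$ discrepancy that is present in the paper's own proof as well (its final display does not match its intermediate estimate) and is harmless, since only comparability of diameters with an absolute constant is used afterwards.
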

\begin{proof}
If $R \neq Q$ and $R \in Q^*$, then by Lemma \ref{prop:whitney_cube} it must hold that $\diam Q \geq c_0 \delta^k/2$.  This combines with inequality (\ref{equ:christ_distance}) in Definition \ref{defn:christ_whitney} to yield the estimate 
$$d(Q,X\setminus\Omega) \leq \frac{2aC_1}{\delta c_0} \diam Q.$$  We have then for all $R \in Q^*$,
\begin{align*}
\diam R & \leq \frac2{a-2} d(R,X\setminus\Omega) \leq \frac2{a-2} \left(\diam Q + d(Q,X\setminus\Omega) + d(R,Q) \right) \\
& \leq \frac{2}{a-2}\left(1 + \frac{aC_1}{2\delta c_0} + \epsilon \right) \diam Q .
\end{align*}
The other inequality follows by symmetry.   
\end{proof}

\begin{lemm}
There exists $N$ such that $|Q^{**}| \leq N$ for all $Q \in M_\Omega$.  Moreover, any point in $\Omega$ is contained in at most $N$ of the cubes $Q^{**}$.  
\end{lemm}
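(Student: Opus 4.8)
The plan is to bound $|Q^*|$ by a constant using the doubling property, deduce the bound on $|Q^{**}|$ by one iteration, and then extract the bounded-overlap statement formally from the disjointness of the cubes together with the symmetry of the relation $**$. Since the enlarged space $\widetilde{X}$ is not known to be doubling, I would carry out the packing estimate in $X$ for the decomposition $M_\Omega$ and transfer the result to $M_{\widetilde{\Omega}}$ using Lemma \ref{prop:whitney_cube}, which identifies the two Whitney-ball structures combinatorially.

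For the packing estimate, fix $Q \in M_\Omega$ with center $x$ and scale $k$ as in property (3) of Definition \ref{defn:christ_whitney}, so that $B(x, c_0 \delta^k) \subset Q \subset B(x, C_1 \delta^k)$. Let $R \in Q^*$ with $R \neq Q$; by Lemma \ref{prop:whitney_cube} we then have $\diam Q \geq c_0 \delta^k/2$, and Lemma \ref{prop:diameter} gives $\diam R$ comparable to $\diam Q$ with constants depending only on $a$, $C_1$, $\delta$, $c_0$, and $\epsilon$. Writing $R \subset B(x_R, C_1 \delta^{k_R})$, these comparisons yield a lower bound $\delta^{k_R} \geq c\,\delta^k$ for a constant $c>0$ of the same dependence, so each such $R$ contains a ball $B(x_R, c'\delta^k)$ for some $c'>0$. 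On the other hand, the estimate $d(x, z) \leq \diam Q + d(Q,R) + \diam R$ for $z \in R$, combined with $d(Q,R) < \epsilon \diam Q$ and the diameter comparison, shows $R \subset B(x, \rho)$ for some $\rho$ comparable to $\delta^k$. Since the cubes in $M_\Omega$ are pairwise disjoint by property (2), the inscribed balls $B(x_R, c'\delta^k) \subset R$ are pairwise disjoint and all lie in $B(x, \rho)$; the doubling property of $X$ then bounds their number, and hence $|Q^*|$, by a constant $N_0$ depending only on the doubling constant and the data of the decomposition.

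It follows that $|Q^{**}| \leq \sum_{R \in Q^*} |R^*| \leq N_0^2 =: N$, uniformly in $Q$. Applying the second assertion of Lemma \ref{prop:whitney_cube} gives $|\widetilde{Q}^{\,**}| = |Q^{**}| \leq N$, so the bound holds for $M_{\widetilde{\Omega}}$ as well. For the overlap statement, observe that by property (2) any point $p$ lies in at most one cube $S$. If $p \in \bigcup Q^{**}$, then $S \in Q^{**}$, which by the symmetry of the relation $**$ is equivalent to $Q \in S^{**}$; hence the set of such $Q$ is contained in $S^{**}$ and has at most $N$ elements.

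The main obstacle is the packing estimate itself: one must track the comparability constants carefully so that the inscribed balls $B(x_R, c'\delta^k)$ are genuinely disjoint and of radius bounded below by a fixed multiple of $\rho$, so that doubling yields a count independent of $Q$. The essential point is that the dichotomy in Lemma \ref{prop:whitney_cube} rules out the degenerate case in which $\diam Q$ is far smaller than $\delta^k$ whenever $Q^*$ contains a cube other than $Q$; this is precisely where the failure of uniform perfectness would otherwise obstruct the argument, and routing the estimate through $M_\Omega$ (a subset of the doubling space $X$) rather than $M_{\widetilde{\Omega}}$ avoids the lack of a doubling bound on $\widetilde{X}$.
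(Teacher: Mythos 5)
Your proposal is correct and follows essentially the same route as the paper: the first bound comes from the doubling property of $X$ together with the diameter comparability of Lemma \ref{prop:diameter} (with Lemma \ref{prop:whitney_cube} both ruling out the degenerate small-diameter case and transferring the count to $M_{\widetilde{\Omega}}$, exactly as the paper does to sidestep the unknown doubling of $\widetilde{X}$), and the overlap statement follows from disjointness of cubes plus the symmetry $R \in Q^{**} \iff Q \in R^{**}$, which is the paper's argument verbatim. The only difference is cosmetic: you make explicit the packing estimate (inscribed balls of radius comparable to $\delta^k$ inside $B(x,\rho)$) and bound $|Q^{**}| \leq N_0^2$ by iterating the bound on $|Q^*|$, where the paper simply asserts that doubling and diameter comparability give the count.
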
 
\begin{proof}
In the case of $M_\Omega$, the first statement follows immediately from the doubling condition and the comparability of diameters of cubes in $Q^{**}$.  Without the doubling condition known to hold for $\widetilde{X}$, we obtain the same result for $M_{\widetilde{\Omega}}$ by Lemma \ref{prop:whitney_cube}.

For the second statement, we recall that $R \in Q^{**}$ if and only if $Q \in R^{**}$.  Suppose that $x \in R$; then $\{Q: x\in Q^{**}\} = \{Q: Q \in R^{**}\}$.  The result follows.    
\end{proof}

The final observation we make here is that if the hypothesis that the Christ-Whitney cubes $Q$ each admit a bi-Lipschitz embedding with uniform bi-Lipschitz constant and embedding dimension is satisfied, then the same is true for the cubes $\widetilde{Q}$, although with larger bi-Lipschitz constant and target dimension in general.  This follows from Theorem \ref{thm:lang_plaut1}.  

At this point, we refer back to Seo's original paper for the remainder of the proof, although it must now be understood as being applied to the enlarged space $\widetilde{X}$ instead of $X$.  To summarize this briefly, the strategy is to construct Lipschitz cutoff functions on the sets $Q^{**}$ and use a coloring argument to pass from local to global embeddability.  This is very much in the spirit of the proof of Assouad's embedding theorem \cite[Prop. 2.6]{Assouad}.  Seo's original proof carries through in our current setting without complication, although the particular constants need to be altered to match those obtained in Lemma \ref{prop:diameter}.


\bibliographystyle{plain}
\bibliography{biblio}

\end{document}